\newcommand\blfootnote[1]{%
  \begingroup
  \renewcommand\thefootnote{}\footnote{#1}%
  \addtocounter{footnote}{-1}%
  \endgroup
}
\newcommand{\R}{\mathbb{R}}
\newcommand{\N}{\mathbb{N}}
\newcommand{\Z}{\mathbb{Z}}
\newcommand{\C}{\mathbb{C}}
\renewcommand{\geq}{\geqslant}
\renewcommand{\leq}{\leqslant}
\DeclareMathOperator\re{Re}
\DeclareMathOperator\diag{diag}
\newtheorem{thm}{Theorem}
\newtheorem{assump}{Assumption}
\newtheorem{defi}[thm]{Definition}
\newtheorem{lemma}[thm]{Lemma}
\newtheorem{prop}[thm]{Proposition}
\newtheorem{model}{Model}
\DeclareMathOperator\im{Im}
\DeclareMathOperator\e{e}
\DeclareMathOperator{\elliptsc}{sc}
\DeclareMathOperator{\elliptnd}{nd}
\DeclareMathOperator{\elliptsn}{sn}
\DeclareMathOperator{\elliptcn}{cn}
\DeclareMathOperator{\elliptdn}{dn}
\DeclareMathOperator{\elliptsd}{sd}
\DeclareMathOperator{\elliptcd}{cd}
\DeclareMathOperator{\elliptdc}{dc}
\DeclareMathOperator{\gr}{Gr}
\DeclareMathOperator\id{Id}
\title{The limit shape of the Leaky Abelian Sandpile Model on isoradial graphs}
\begin{document}

\date{\today}

\author{Théo Ballu\thanks{Théo Ballu, Univ Angers, CNRS, LAREMA, SFR MATHSTIC, F-49000 Angers, France, \href{mailto:theo.ballu@univ-angers.fr}{theo.ballu@univ-angers.fr}.}}
\maketitle

\begin{abstract}
    We study the limit shape of the boundary of the leaky sandpile model on isoradial graphs. These graphs are equipped with conductances and masses introduced by Boutillier, de Tilière and Raschel, which are defined with the help of the geometry of the graph and Jacobi elliptic functions. Building on the link between the shape of the boundary of the sandpile and the Green function of the graph, together with the asymptotics of the Green function obtained by the previously mentioned authors, we prove an explicit formula for the limit shape, both in $\R^d$ where the graph can be seen as a monotone surface under our main geometric assumption, and on the plane where it naturally lies. We then study the limit regime where the elliptic modulus, the parameter of elliptic functions, tends to $0$, which comes down to approaching the massless case, and obtain a circle as a universal limit shape.
    \blfootnote{\textit{Keywords:} Abelian sandpile model, isoradial graphs, killed random walk, Green function, limit shape.}
    \blfootnote{\textit{2020 Mathematics Subject Classification:} 60G50; 60K35; 82B41.}
\end{abstract}

\section{Introduction and main results}
\subsection{Context}
\paragraph{Statistical mechanics on isoradial graphs}

Discrete models of statistical mechanics study physical systems using a microscopic description, typically of the size of atoms or molecules. The way to do it is usually to associate random configurations on graphs, where vertices represent particle sites and edges model interactions between particles. The first graphs studied are the lattices $\Z^d$, $d \in \N$, or regular planar graphs such as triangle or hexagonal graphs, but a more general class of planar graphs that are well suited for statistical mechanics has emerged: isoradial graphs (Definition~\ref{def:isoradial_graphs}), see the survey \cite{BdT_survey_isoradial} of Boutillier and de Tilière for a full review. One of the reasons for the popularity of isoradial graphs is the possibility to perform discrete complex analysis, see for example Kenyon in \cite{kenyon_laplace_dirac_planar} or Chelkak and Smirnov in \cite{ChSm}. Many classical models have been studied on isoradial graphs. Two of the most famous ones are the dimer model, see \cite{kenyon_laplace_dirac_planar}, and the Ising model, studied for example in \cite{ChSm} and \cite{BdT}.

In this article, the probability model that is used is the random walk on isoradial graphs. As far as statistical mechanics is concerned, it is well known that it can be related with the spanning tree model (see for example \cite{PrWi-98} for an algorithmic point of view). Isoradial graphs allow to compute local integral formulas for the Green function of graphs equipped with conductances and masses. In \cite{kenyon_laplace_dirac_planar}, Kenyon studies it for critical weights while the article \cite{BdTR} of Boutillier, de Tilière and Raschel, which we will be relying on extensively, generalizes it to a family of non-critical weights. Asymptotics of Green functions are obtained performing an asympotic analysis in the integral formulas, which are used by two of the previous authors in \cite{BR22} to compute the Martin boundary of killed random walks on isoradial graphs.

\paragraph{The sandpile model and its leaky variant}
The Abelian Sandpile Model is a cellular automaton that aims to model the spreading of sand on a graph $G = (V,E)$. It was invented in 1987 in \cite{BTW} by Bak, Tang and Wiesenfeld to illustrate the notion of self-organized criticality, a general property of some physics models. In its most basic form, it starts from a sandpile configuration $s : V \to \Z_{\geq 0}$ that counts the number of grains of sand at each vertex. As long as there are vertices which have more grains of sand than their degree in the graph, they topple, that is, they send one grain of sand to each of their neighbors, until the configuration eventually becomes stable. If at some point, several vertices can topple, then the chosen order does not change the final stable configuration, hence the term \emph{Abelian}. Surveys on the sandpile model can be found, for example, in \cite{Hol_survey} or \cite{Jarai_survey}.

An interesting question about the model is the shape of the final configuration (after rescaling) when the initial configuration consists of $N$ grains of sand at the origin. On the plane, the fractal structure of the interior of the shape has been studied in link with Apollonian circles in \cite{LevPegSma}, but less is known about the boundary of the shape. A proof of convergence has been given by Pegden and Smart in \cite{PegSma} and the case where the initial configuration is random was studied by Bou-Rabee in \cite{Bou21}, but a precise description of the boundary is still lacking. For example, it was conjectured but not proved that it is not a circle.

Recently, explicit descriptions of this boundary were given for an alternative sandpile model, called \emph{leaky}. In this model, the number of grains of sand on a vertex is no longer necessarily an integer, the number of grains given by a vertex is not necessarily the same for all its neighbors and, most important, a fraction of the sand on a vertex is lost each time it topples. A full description in our context is given in Model~\ref{model:sandpile_article3}. 
Using the link between a killed random walk and the shape of the sandpile, Alevy and Mkrtchyan studied in \cite{AM22} the limit shape of the boundary of the leaky Abelian sandpile on $\Z^2$. They prove that it is the dual curve of the boundary of the gaseous phase of the amoeba of a Laurent polynomial. Their work was generalized in \cite{BBMR} to $\Z^d$ for any $d$, and even thickenings of $\Z^d$, that is spaces of the form $\Z^d \times \{1, \ldots,p\}$ where the toppling rules are different on each of the $p$ layers of $\Z^d$, thus providing a convergence result for spatially non-homogeneous models.

\subsection{Main results}\label{subsec:main_results}
\paragraph{General aim of the paper} The goal of this article is to provide a link between Green functions and the shape of the sandpile on isoradial graphs, as it was done in the previously mentioned works \cite{AM22, BBMR} for other graphs, and to use the asymptotics of the Green functions from \cite{BdTR, BR22} to analyze the shape of the leaky sandpile model on isoradial graphs equipped with the masses and conductances of \cite{BdTR}. These masses and conductances are defined using geometric quantities of the graph and Jacobi elliptic functions. 

\paragraph{Geometric assumptions on the graph}
The originality of this work is that the underlying graph is no longer periodic, not even partially. Periodicity is replaced by the geometrical definition of isoradial graphs, with a few additional assumptions. 
These geometric assumptions made on the graph are essentially the same as in \cite{BdTR, BR22}. For most of our results, the graph will be assumed to be quasicrystalline, which means that the edges of the diamond graph (Definition~\ref{def:isoradial_graphs}) take a finite number $d$ of directions. In this setting, an isoradial graph can be seen as the projection onto the plane of a monotone surface $S$ of $\Z^d$, and we will therefore have two kinds of results: results of convergence on the surface $S \subset \R^d$ and on the plane, where the isoradial graph is naturally embedded. 

To obtain a description of the limit shape in spherical coordinates in $\R^d$, we need an additional regularity assumption on the monotone surface associated with the graph. We propose two of such assumptions (see Assumptions~\ref{assump:regularity_weak} and \ref{assump:regularity_strong}) that essentially say that the directions of $\R^d$ (for the $\ell^1$ norm) that appear at infinity on the monotone surface do not go too far away from the surface. One of these assumptions is more restrictive than the other, leading to a stronger convergence result. 

As for convergence results on the plane, we will need to assume that the graph is \emph{asymptotically flat} (Assumption~\ref{defi:asymptotically_flat}). This assumption, that comes from the work \cite{BR22} of Boutillier and Raschel, is another kind of regularity assumption, that allows to associate a direction in $\R^d$ to any direction on the plane. It is stronger that the regularity assumptions \ref{assump:regularity_weak} and \ref{assump:regularity_strong}.

\paragraph{Main theorems}
We study the limit shape of the leaky sandpile model with conductances and masses defined in \eqref{eq:conductance} and \eqref{eq:mass} when the initial configuration consists of $N$ grains at the origin. 
As mentioned before, since we work with quasicrystalline graphs, we have two types of results: on a surface of $\R^d$ and on its projection onto the plane. 
\begin{itemize}
    \item We first study the limit shape of the sandpile when $N$ goes to infinity, after rescaling by $\log N$. In $\R^d$, we provide two theorems that show the convergence to a limit shape, that is described in spherical coordinates in \eqref{eq:limit_shape_N_infinity}. This limit shape is parametrized by an explicit function defined in \eqref{eq:def_theta} using Jacobi elliptic functions. The first theorem, Theorem~\ref{thm:case_N_infinity_weak_regularity_assumption}, provides a convergence under the weak regularity Assumption~\ref{assump:regularity_weak}, while the second one, Theorem~\ref{thm:case_N_infinity_strong_regularity_assumption}, ensures that this convergence is uniform in the directions under the stronger regularity Assumption~\ref{assump:regularity_strong}. On the plane, the \emph{asymptotically flat} assumption ensures that the stronger regularity assumption is true, leading to Theorem~\ref{thm:limit_shape_N_plane}, which states that the sandpile on the plane converges uniformly to the projection of the shape previously obtained in $\R^d$.
    \item We then study the limit case where the elliptic modulus, that is the parameter for Jacobi elliptic functions, tends to $0$ (\emph{after} having $N$ tend to infinity). When the elliptic modulus is equal to $0$, Jacobi elliptic functions become usual trigonometric functions. This allows us to get a limit shape in $\R^d$, easily expressed with trigonometric functions in Proposition~\ref{prop:limit_k_0_surface}. Once projected onto the plane, Proposition~\ref{prop:limit_shape_k_0_plane} shows that this limit shape is universal and does not depend on the graph: it is simply a circle.
\end{itemize}

\subsection{Outline of the paper}
\begin{itemize}
    \item Section~\ref{sec:background_isoradial} reminds the main definitions and results needed from \cite{BdTR} on isoradial graphs with conductances and masses defined with the help of Jacobi elliptic functions. The two principal objects of interest are the Laplacian of the graph and its inverse, the Green function. The main result we will use, which is \cite[Theorem~14]{BdTR}, is reminded in Theorem~\ref{thm:asymp_green}. It gives the asymptotics of the Green function $\gr(x_0,y)$ of the graph (or equivalently of the potential function of the associated random walk) as $y$ goes to infinity.
    \item Section~\ref{sec:sandpile_model} describes the leaky sandpile model on our isoradial graph equipped with the previously mentioned conductances and masses. The final shape of the sandpile started with $N$ grains, for $N$ fixed, is defined with the help of the odometer function. Using similar ideas to those of \cite{AM22, BBMR}, that is applying the (modified) Laplacian of the graph to the odometer function, we obtain the main result of this section, Proposition~\ref{prop:threshold}. In this proposition, we show that there are two thresholds, of the form $\frac{c}{N}$ for some constants, such that when the Green function at a point $x$, $\gr(x_0,x)$, is above one of those thresholds, the point $x$ is in the stable configuration of the sandpile started with $N$ grains of sand at $x_0$, while if it is under the other threshold, it is not in the configuration. In other words, the boundary of the sandpile is bounded between two level curves of the Green functions, for levels of order $\frac{1}{N}$, allowing to use the asymptotics of the Green function mentioned in Section~\ref{sec:background_isoradial} to get approximations of the shape of this boundary.
    \item Finally, in Section~\ref{sec:limit_shape}, we state and prove the main convergence results mentioned in Section~\ref{subsec:main_results}. The goal of Section~\ref{subsec:shape_r^d} is to study the limit shape of the boundary of the sandpile in $\R^d$. To do so, we rely on the analysis of \cite{BBMR}, which studies the case of $\Z^d$. However the results of \cite{BBMR} are not enough, as in the case of quasicrystalline graphs, the sandpile does not live in the whole of $\R^d$, but on a monotone surface, hence the need for the regularity assumptions on the surface mentioned in Section~\ref{subsec:main_results}. With these assumptions, we are able to state two convergence theorems that describe the limit shape of the boundary of the sandpile in spherical coordinates, Theorems~\ref{thm:case_N_infinity_weak_regularity_assumption} and \ref{thm:case_N_infinity_strong_regularity_assumption}. We then study what this limit shape becomes when the elliptic modulus, that parameterizes Jacobi elliptic functions, tends to $0$, see Proposition~\ref{prop:limit_k_0_surface}.

    Using the \emph{asymptotically flat} assumption of \cite{BR22} and properties of the projection from the surface of $\R^d$ onto the plane, we finally study the two previous regimes, that is $N$ to infinity and $k$ to $0$ (after having $N$ tend to infinity), for the natural embedding on the plane of the isoradial graph, obtaining an explicit description in spherical coordinates of the limit shape on the plane in Theorem~\ref{thm:limit_shape_N_plane}. When the elliptic modulus tends to $0$, this limits shapes turns out to be a circle, as seen in Proposition~\ref{prop:limit_shape_k_0_plane}.
    
\end{itemize}

\section{Background on isoradial graphs and random walks}\label{sec:background_isoradial}
In this section, we remind general facts on isoradial graphs and the asymptotics of Green functions obtained in \cite{BR22, BdTR}. 
\subsection{Definition, geometric assumptions}

\begin{defi}[Isoradial graph, diamond graph]\label{def:isoradial_graphs}
    An \emph{isoradial graph} $G = (V,E)$ is a planar graph with an embedding such that every face is inscribable in a circle of radius $1$, which center lies in the interior of the face. The dual graph $G^*$ of $G$ is embedded on the plane by placing its vertices at the center of those circles.

    The \emph{diamond graph} $G^\diamond$ associated with $G$ is constructed as follows. The vertices of $G^\diamond$ are those of $G$ and of $G^*$ and the edges of $G^\diamond$ connect every vertex of $G^*$ to the corners of the corresponding face of $G$.
\end{defi}

\begin{figure}[htbp]
    \centering
    \includegraphics[scale=0.12]{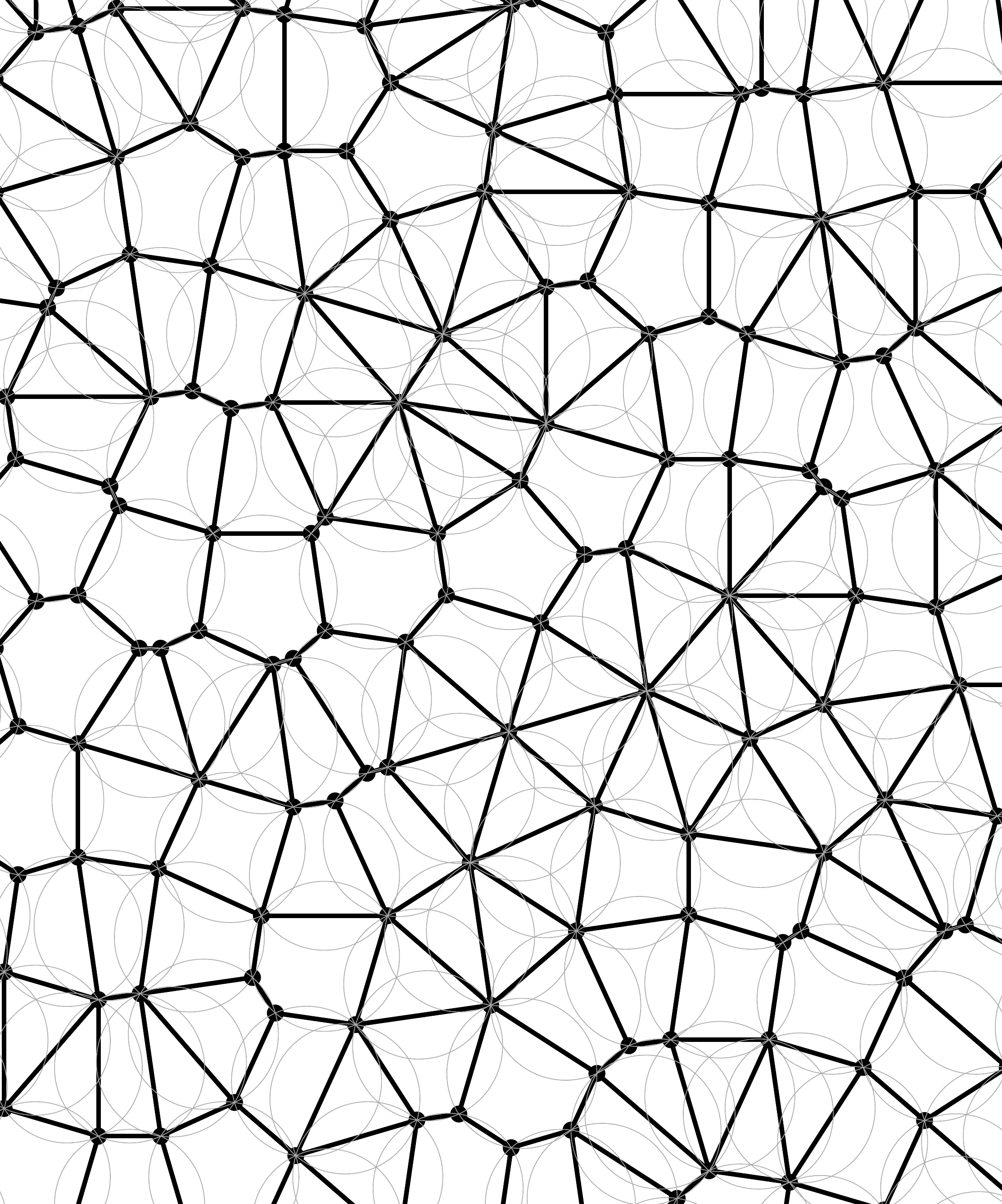}
    \includegraphics[scale=0.12]{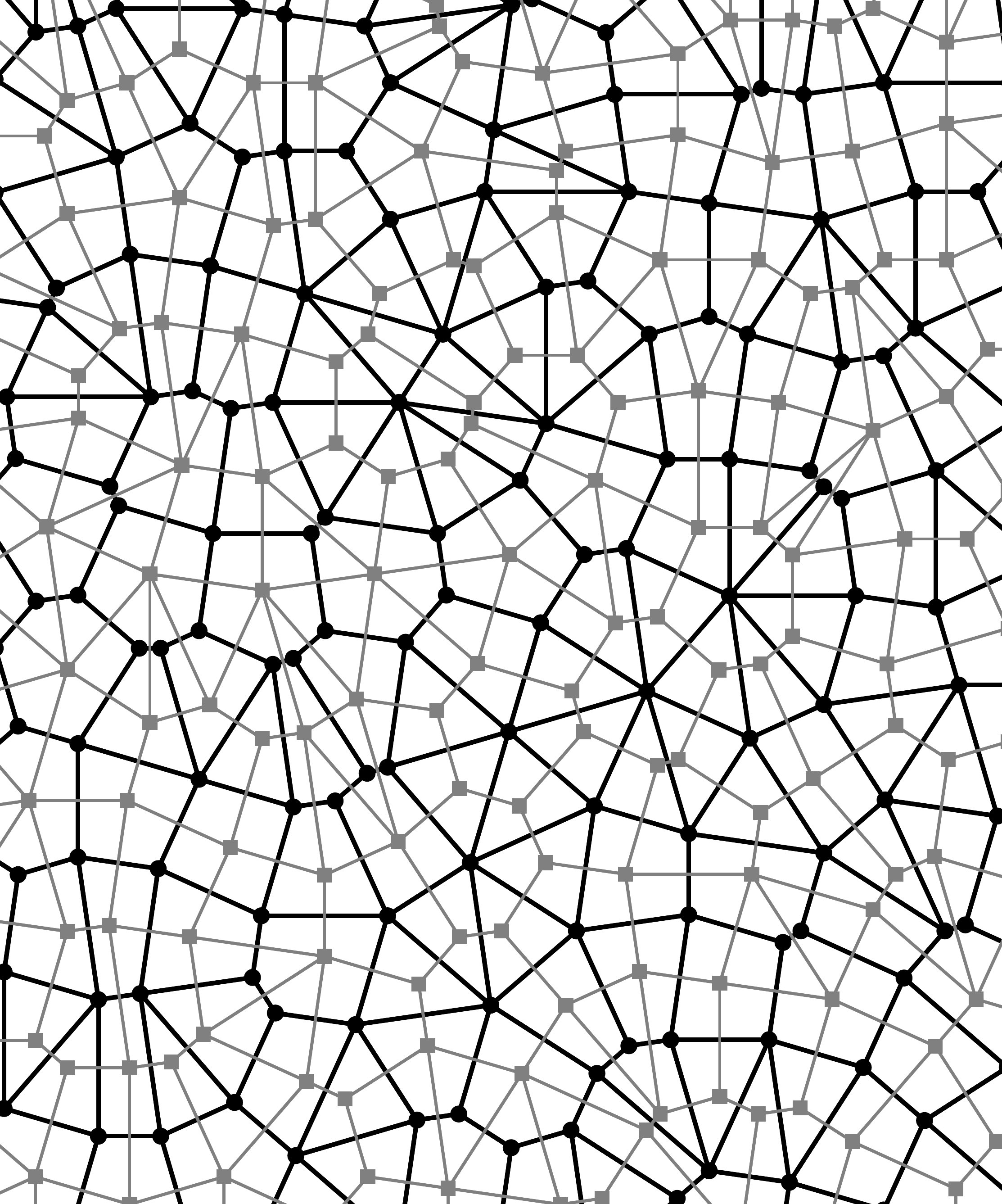}
    \includegraphics[scale=0.12]{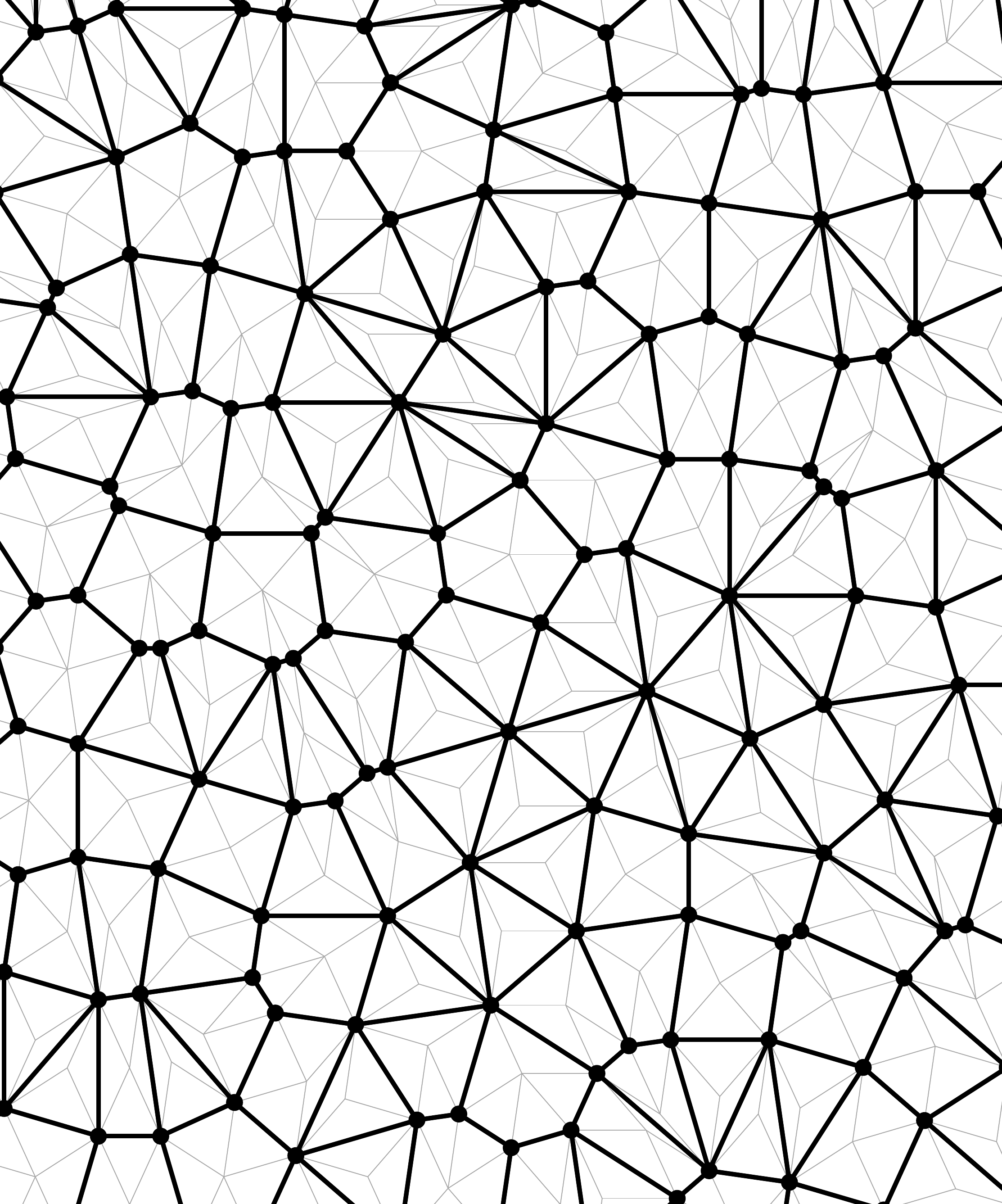}
    \caption{An isoradial graph, with its dual graph, and with its diamond graph. Illustrations by Cédric Boutillier.}
    \label{fig:my_label}
\end{figure}

From now on, $G = (V,E)$ is an isoradial graph whose faces cover the whole plane.
The faces of $G^\diamond$ are rhombi, whose diagonals are edges of $G$ and $G^*$.

The authors of \cite{BR22, BdTR} use the geometry of the diamond graph $G^\diamond$ to define conductances and masses on the graph $G$, thus constructing a killed random walk on $G$. We remind their notations and definitions.  The angle $\overline{\theta}_e$ associated with the edge $e$ is the half angle of the rhombus associated with $e$, as in Figure~\ref{fig:angles_lozanges}.

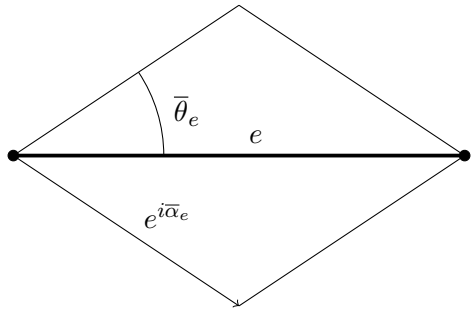
\begin{figure}
    \centering
    \begin{tikzpicture}
        \draw[fill] (0,0) circle (0.07);
        \draw[fill] (6,0) circle (0.07);
         \draw[-, line width = 0.5mm] (0,0) to (6,0);
         \draw[ shift={(0,0.25)}] (3,0) node[right]{$e$};
         \draw[-] (0,0) to (3,2);
         \draw[-] (6,0) to (3,2);
         \draw[->] (0,0) to (3,-2);
         \draw[-] (6,0) to (3,-2);
         \draw (2,0) arc (0:33.69:2);
         \draw[ shift={(0,0)}] (2,.6) node[right]{$\overline{\theta}_e$};
         \draw[ shift={(0,0)}] (1.6,-0.8) node[right]{$e^{i\overline{\alpha}_e}$};
    \end{tikzpicture}
    \caption{Notations for the angles associated with an edge $e$ in the diamond graph.}
    \label{fig:angles_lozanges}
\end{figure}

Here are the main geometric assumptions made.

\begin{assump}\label{assump:angles_not_too_small}
    There exists $\varepsilon>0$ such that, for every edge $e$, $\theta_e \in \left[\varepsilon, \frac{\pi}{2}-\varepsilon\right]$.
\end{assump}

In particular, this assumption ensures that the graph is \emph{locally finite}. A stronger geometric assumption is sometimes necessary. 

\begin{defi}\label{def:quasicrystalline}
    An isoradial graph $G$ is said to be \emph{quasicrystalline} if the number $d$ of directions of the edges of its diamond graph is finite.
\end{defi}
In the quasicrystalline case, we can see the edges of the diamond graph $G^\diamond$ as projections of unit vectors of $\Z^d$. The diamond graph $G^\diamond$ is then the projection onto a plane of a \emph{monotone surface} in $\Z^d$, that will be denoted $S$. See Figure~4 of \cite{BR22} for an illustration when $d=3$. If an origin $x_0$ is fixed in the graph, then for any minimal path from $x_0$ to a vertex $y$, we can count the number of edges $n(y) = (n_1(y), \ldots, n_d(y))$ of each type, counted positively or negatively according to the direction. These are the \emph{coordinates of $y$ when lifted onto the monotone surface of $\Z^d$}. The \emph{reduced coordinates} are $\frac{n(y)}{\|n(y)\|_1}$ where $\|n(y)\|_1 = \sum_{j=1}^d |n_j(y)|$.

When talking about norms or directions in $\R^d$, we will always use the $\ell^1$ norm.

We will study the sandpile model on quasicrystalline isoradial graphs from two points of view: on the plane, where the graph is naturally embedded, and on the surface $S$ of $\Z^d$. If we denote $\overline{\alpha}_1, \ldots, \overline{\alpha}_d$ the angles of the $d$ types of edges, the projection that sends $S$ to $G^\diamond$ is \begin{equation}
    \begin{array}{ccccc}
        \pi & :& S & \longrightarrow &G^\diamond  \\
         & &\left( x_1, \ldots, x_d \right) & \longmapsto & \sum_{j=1}^d x_j e^{i \overline{\alpha}_j}.
    \end{array}
\end{equation}

\subsection{Killed random walks on isoradial graphs and their Green functions}

\paragraph{Jacobi elliptic functions, conductances and masses}
In \cite{BdTR}, the isoradial graph $G$ is given conductances and masses using the geometry of the graph and Jacobi elliptic functions.

We will keep to the bare essentials on Jacobi elliptic function and refer to \cite{abramowitz, Law} and \cite[Section~2.2]{BdTR} for useful properties.
We first fix an \emph{elliptic modulus} $k \in (0,1)$. The complementary elliptic modulus is $k' = \sqrt{1-k^2}$. The complete elliptic integral of the first kind $K$ and of the second kind $E$ are
\begin{equation}
    K := K(k) := \int_{0}^{\frac{\pi}{2}} \frac{1}{\sqrt{1-k^2 \sin^2(\tau)}} \mathrm{d}\tau
\end{equation}
and
\begin{equation}
    E := E(k) := \int_0^{\frac{\pi}{2}} \sqrt{1-k^2 \sin^2(\tau)}\mathrm{d}\tau.
\end{equation}
Their complementary integrals are \[K' := K'(k) := K(k')~~~~\textrm{et}~~~~ E' := E'(k) := E(k').\]
We note $\mathrm{s}$, $\mathrm{c}$, $\mathrm{d}$ et $\mathrm{n}$ the complex numbers $0$, $K$, $K +i K'$ and $i K'$. If $\mathrm{p}$ and $\mathrm{q}$ are two distinct letters of $\{ \mathrm{s,c,d,n}\}$, then $\mathrm{pq} := \mathrm{pq}(\cdot |k)$ is the unique meromorphic, doubly periodic function on $\C$ such that:
\begin{itemize}
    \item $\mathrm{p}$ is a simple zero of $\mathrm{pq}$ and $\mathrm{q}$ is a simple pole;
    \item  $\mathrm{q}- \mathrm{p}$ is a half-period of $\mathrm{pq}$, while vectors joining $\mathrm{p}$ to the two other letters are quarter periods;
    \item the first coefficient of the expansion of $\mathrm{pq}(u |k)$ in power series at $0$ is $1$, in other words the first term is either $u$, $\frac{1}{u}$ or $1$ depending on whether $0$ is a zero, a pole or none of those.
\end{itemize}

\begin{figure}
    \centering
    \begin{tikzpicture}
  % Définition des sommets
  \coordinate (s) at (0, 0);   % bas-gauche
  \coordinate (c) at (6, 0);   % bas-droite
  \coordinate (d) at (6, 3);   % haut-droite
  \coordinate (n) at (0, 3);   % haut-gauche

  % Dessin du rectangle
  \draw (s) -- (c) -- (d) -- (n) -- cycle;

  % Points visibles aux sommets
  \foreach \point in {s, c, d, n} {
    \fill (\point) circle[radius=2pt];
  }

  % Étiquettes des sommets
  \node[below left] at (s) {s};
  \node[above right] at (s) {$0$};
  \node[below right] at (c) {c};
  \node[above left] at (c) {$K$};
  \node[above right] at (d) {d};
  \node[below left] at (d) {$K + i K'$};
  \node[above left] at (n) {n};
  \node[below right] at (n) {$iK'$};
\end{tikzpicture}
    \caption{The rectangle $[0,K] \times [0, K']$ used to define Jacobi elliptic functions.}
    \label{fig:rectangle_ellipt_func}
\end{figure}
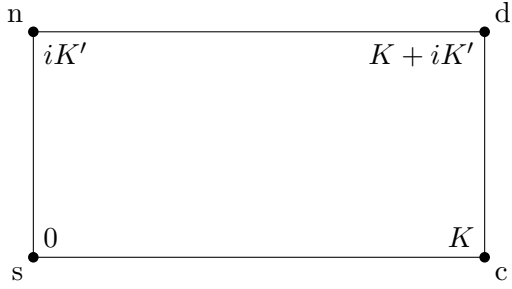

A useful auxiliary function needed in \cite{BdTR} to define the mass is the function $\mathrm{A}$ defined by \[\forall u \in \C,~~ \mathrm{A}(u|k) = \frac{1}{k'} \left( \mathrm{Dc}(u|k) + \frac{E-K}{K}u \right),\]
where
\[\mathrm{Dc}(u|k) = \int_0^u \mathrm{dc}^2(v|k) \mathrm{d}v.\]
When dealing with elliptic functions with modulus $k$, angles have to be multiplied by $\frac{2K(k)}{\pi}$. Following the notations of \cite{BdTR}, natural angles are noted with a bar (e.g. $\overline{\theta}$, $\overline{\alpha}$) while elliptic angles are noted without a bar (e.g. $\theta = \frac{2K}{\pi} \overline{\theta}$, $\alpha = \frac{2K}{\pi} \overline{\alpha}$).

The \emph{conductance} of an edge $e = xy$ is then defined by
\begin{equation}\label{eq:conductance}
    \rho_e = \rho(xy) = \elliptsc{\left( \theta_e | k \right)},
\end{equation}
and if $xy$ is not an edge of $G$, then we set $\rho(xy) = 0$.
The (squared) \emph{mass} of a vertex $x$ linked with edges $e_1 , \ldots, e_k$ is 
\begin{equation}\label{eq:mass}
    m^2(x) = \sum_{j=1}^k \left( A\left( \theta_{e_j} |k \right) -\elliptsc \left( \theta_{e_j} |k \right) \right)
\end{equation}
where 
\begin{equation}
    \forall u \in \C,~~ A(u|k) = \frac{1}{k'} \left( \int_{0}^{u} \elliptdc^2(v|k) \mathrm{d}v  + \frac{E-K}{K} u \right).
\end{equation}
Let us justify the relevance of the definitions of conductances and masses. First of all, they are nonnegative quantities, which is the minimum requirement. This is immediately apparent for conductances, and is proved in \cite[Prop~6]{BdTR} for masses, which are even positive when $k>0$. Furthermore, taking $k = 0$, we obtain $m^2 = 0$ and $\rho_e = \tan\left( \overline{\theta}_e \right)$. We thus find the critical weights studied by Kenyon in \cite{kenyon_laplace_dirac_planar}. The authors of \cite{BdTR} therefore define a family of conductances and masses that are noncritical for $k > 0$ and critical for $k=0$. The analyticity with respect to the parameter $k$ around $0$ (\cite[Lemma~7]{BdTR}) shows that this is not just \emph{any} extension of the critical case. Above all, this choice is relevant because of its compatibility with the geometry of the graph. Indeed, the Laplacian satisfies a principle of 3-dimensional consistency. In practice, this means that harmonic functions are invariant under star-triangle transformations of the graph. More precisely, consider a graph with a star, i.e., a vertex of degree $3$. We can delete this vertex and connect its three neighbors in pairs to obtain a triangle pattern instead of the star. Conversely, starting from a triangle pattern, we can add a vertex to obtain a star (see Figure~\ref{fig:star_triangle} or \cite[Fig~6]{BdTR}). Three-dimensional consistency means that if a function $f$ is harmonic at a vertex $x_0$ for the Laplacian of a graph with a star at $x_0$, then the Laplacians of $f$ for the graph with a star and the associated graph with a triangle are the same. Conversely, if $f$ is a function defined on a graph with a triangle, then there exists a unique extension of $f$ to the associated graph with a star such that the Laplacians are the same. This is the subject of Proposition~8 in \cite{BdTR}. The discussion following this proposition explains how, under certain assumptions, a harmonic function on an isoradial graph can be transformed into a harmonic function on a network of the form $\Z^d$ using a sequence of star-triangle transformations.

\begin{figure}
    \centering
    \includegraphics[scale=0.8]{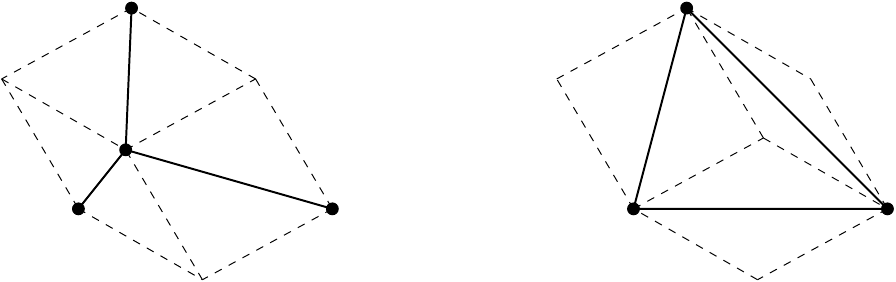}
    \caption{A star-triangle transformation.}
    \label{fig:star_triangle}
\end{figure}

\paragraph{Killed random walk}
These conductances and masses can be used to define a killed random walk $\left( X_n \right)_{n \in \N}$ on $G$.
More precisely, starting from a vertex $x$, the probability to jump to $y$ is
\begin{equation}
    \mathbb{P} \left( X_{n+1} = y |X_n=x \right) =\mathbb{P}(x \to y) = \frac{\rho(xy)}{\sum_{z \in V} \rho(xz) + m^2(x)}
\end{equation}
and the probability to kill the walk at $x$ is 
\begin{equation}
    \mathbb{P}\left( X_{n+1}\text{ is killed} | X_n =x \right) = \mathbb{P}(x \to \text{kill}) = 1- \sum_{z \in V} \mathbb{P}(x \to z) .
\end{equation}

%Under Assumption~\ref{assump:angles_not_too_small}, the probability to be killed is bounded away from zero. This ensures that the random walk is transient. 

In order to work with the potential of the walk, we need it to be transient, which is a consequence of the killing. We even have a much stronger result: the probability to kill the walk is bounded away from zero.

\begin{lemma}\label{lem:diagonal_laplacian_bound}
    There exist constants $c, c', \delta >0$ such that for every $x \in V$,
    \begin{equation}\label{eq:bounds_diagonal_laplacian}
        c \leq \sum_{z \in V} \rho(xz) + m^2(x) \leq c' ;
    \end{equation}
    \begin{equation}\label{eq:proba_kill_bounded_away_from_zero}
        \mathbb{P}(x \to \textnormal{kill}) > \delta.
    \end{equation}
\end{lemma}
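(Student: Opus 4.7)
The starting observation is the algebraic identity
\[
\sum_{z \in V} \rho(xz) + m^2(x) = \sum_{e \ni x}\elliptsc(\theta_e|k) + \sum_{e\ni x}\bigl(A(\theta_e|k)-\elliptsc(\theta_e|k)\bigr) = \sum_{e \ni x} A(\theta_e|k),
\]
which collapses \eqref{eq:bounds_diagonal_laplacian} into two-sided control of a single sum. To control that sum, I would first extract from Assumption~\ref{assump:angles_not_too_small} a uniform degree bound: the rhombi of $G^\diamond$ around $x$ have interior angles $2\overline{\theta}_e$ summing to $2\pi$, so $\sum_{e\ni x}\overline{\theta}_e = \pi$ and hence $\deg(x)\leq \pi/\varepsilon$. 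Moreover, the elliptic angles $\theta_e$ all lie in the compact subinterval $I_\varepsilon := [\tfrac{2K\varepsilon}{\pi},\,K-\tfrac{2K\varepsilon}{\pi}] \subset (0,K)$ on which $A(\cdot|k)$ and $\elliptsc(\cdot|k)$ are continuous.

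With these two ingredients, the upper bound $c'$ is simply the degree bound times $\max_{I_\varepsilon}A(\cdot|k)$, which is finite. For the lower bound, I would use that $\elliptsc(\cdot|k)$ is strictly positive and increasing on $(0,K)$ and that $x$ has at least one neighbor, so $\sum_z\rho(xz)\geq \elliptsc(\tfrac{2K\varepsilon}{\pi}|k)>0$; combined with $m^2(x)\geq 0$ from \cite[Prop~6]{BdTR}, this gives \eqref{eq:bounds_diagonal_laplacian}.

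For \eqref{eq:proba_kill_bounded_away_from_zero}, I would rewrite
\[
\mathbb{P}(x\to\text{kill}) \;=\; \frac{m^2(x)}{\sum_z\rho(xz)+m^2(x)} \;\geq\; \frac{m^2(x)}{c'},
\]
so that the statement reduces to a uniform strictly positive lower bound on $m^2(x)$. This is the step I expect to be the main obstacle, because \cite[Prop~6]{BdTR} only guarantees positivity vertex by vertex and not a quantitative constant. The natural strategy is to verify from the explicit formula that, for $k>0$, the single-edge function $\theta\mapsto A(\theta|k)-\elliptsc(\theta|k)$ is itself strictly positive throughout $(0,K)$; by continuity and compactness it then admits a uniform minimum $\delta_0>0$ on $I_\varepsilon$, and the existence of at least one incident edge gives $m^2(x)\geq \delta_0$, whence \eqref{eq:proba_kill_bounded_away_from_zero} holds with $\delta := \delta_0/c'$. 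Should per-edge positivity fail, the fallback plan is to exploit the global constraint $\sum_{e\ni x}\theta_e = 2K$ at each primal vertex together with convexity/monotonicity properties of $A-\elliptsc$ to extract a uniform lower bound on the total sum over all admissible angle configurations in $I_\varepsilon$.
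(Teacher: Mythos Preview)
Your proposal is correct and follows essentially the same route as the paper. The paper confirms your primary strategy for the mass lower bound: it extracts from the proof of \cite[Prop~6]{BdTR} that $f=A(\cdot|k)-\elliptsc(\cdot|k)$ is strictly concave on $[0,K]$ with $f(0)=f(K)=0$, hence strictly positive on $(0,K)$, so the per-edge positivity holds and your fallback is unnecessary.
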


\begin{proof}
According to Assumption~\ref{assump:angles_not_too_small}, each vertex has at most $\frac{\pi}{\varepsilon}$ neighbors in the graph. Besides, the function $\elliptsc(\cdot | k )$ is positive and increasing on $\left( 0, \frac{\pi}{2} \right)$, so for every edge $xy$, $\elliptsc(\varepsilon|k) \leq \rho(xy) \leq \elliptsc\left( \frac{\pi}{2} - \varepsilon|k\right)$, hence \[ \elliptsc(\varepsilon|k) \leq\sum_{z \in V} \rho(xz) \leq \frac{\pi}{\varepsilon} \elliptsc\left( \frac{\pi}{2} - \varepsilon|k\right). \]
Moreover, \cite[Prop 6]{BdTR} proves that the mass $m^2$ is non-negative and using once again Assumption~\ref{assump:angles_not_too_small} to bound $A$, we can similarly bound $m^2$ uniformly in $x$, leading to \eqref{eq:bounds_diagonal_laplacian}.

Given that $\mathbb{P}(x \to \textnormal{kill}) = \frac{m^2(x)}{\sum_{z \in V} \rho(xz) + m^2(x)}$ and that \eqref{eq:bounds_diagonal_laplacian} bounds the denominator, we only need to bound $m^2(x)$ away from zero to get \eqref{eq:proba_kill_bounded_away_from_zero}. To do so, we take a closer look at the proof of the non-negativity of $m^2$ in \cite[Prop 6]{BdTR}. It is proved that the function $ f = A(\cdot |k) - \elliptsc(\cdot|k)$ is strictly concave and takes the value $0$ at $0$ and $K$. Therefore, because of Assumption~\ref{assump:angles_not_too_small}, \[m^2(x) \geq \min\left( f(\varepsilon), f\left( \frac{\pi}{2}- \varepsilon \right) \right) >0.\qedhere\]
\end{proof}

\paragraph{Laplacian, Green function and its asymptotics}

We define the potential of the random walk by
\begin{equation}
    U(x,y) = \mathbb{E}_x \left[ \sum_{n=0}^{+\infty} \mathds{1}_{X_n =y} \right] = \sum_{n=0}^{+\infty} \mathbb{P}_x\left( X_n =y \right).
\end{equation}
Because of \eqref{eq:proba_kill_bounded_away_from_zero}, $U(x,y) \leq \sum_{n=0}^{+\infty} (1-\delta)^n < +\infty$.

The potential of a random walk is often called its \emph{Green function}. However, the authors of \cite{BdTR} define the Green function from the point of view of graphs, that is, the inverse of the (massive) Laplacian of the graph. Looking at these functions as matrices indexed by vertices of $G$, it comes down to taking the product of the potential with a diagonal matrix. More explicitly, the \emph{Green function} $\gr$ of the graph is 
\begin{equation}\label{eq:lien_green_potential}
    \gr(x,y) = \frac{U(x,y)}{\sum_{z \in V} \rho(yz) + m^2(y)}.
\end{equation}
The matrix product formulation is 
\begin{equation}\label{eq:lien_green_potential_matrix}
    \gr = UD^{-1}
\end{equation}
where $D$ is the diagonal matrix
\begin{equation}\label{eq:diagonal_matrix_potential_green}
    D = \diag \left( \sum_{z \in V} \rho(xz) + m^2(x) \right)_{x \in V}.
\end{equation}

However, in terms of asymptotics, there will be no significant difference between $U$ and $\gr$. Indeed, Lemma~\ref{lem:diagonal_laplacian_bound} ensures that 
\begin{equation}\label{eq:encadrement_Gr_V}
    c \gr \leq U \leq c' \gr.
\end{equation}

One of the main results of \cite{BdTR} is to provide an integral formula for the Green function, which leads to an asymptotic equivalent when $y$ goes to infinity. This integral formula involves the following discrete exponential function.
\begin{defi}
    Let $x,y \in G$ and $x = x_1, \ldots, x_n=y$ a path from $x$ to $y$. We note $\overline{\alpha}_j$ the angle associated with the edge $x_j x_{j+1}$, as the angle $\overline{\alpha}_e$ in Figure~\ref{fig:angles_lozanges}. Then the exponential function $\e_{(x,y)}$ is defined by
    \[\forall u \in \C,~~\e_{(x,y)} (u) = \prod_{j=1}^{n-1} \left( i \sqrt{k'} \elliptsc\left( \frac{u-\alpha_j}{2} \right) \right).\]
    It does not depend on the path used to link $x$ to $y$.
\end{defi}

Using the previously mentioned integral formula for the Green function and the saddle-point method, the authors of \cite{BdTR} obtain the following theorem.

\begin{thm}[\cite{BdTR}, Thm 14]\label{thm:asymp_green}
    We assume that the graph is quasicrystalline and denote $|y-x|$ the graph distance between vertices $x$ and $y$ in $G^\diamond$, which is also the $\ell^1$ distance in the monotone surface of $\Z^d$ whose projection is $G^\diamond$, see after Definition~\ref{def:quasicrystalline}.
    There exist a function $\chi$ and $u_0$ (that depend on $x_0,y$) such that $\chi''(u_0) >0$ and, when the distance $|y - x_0|$ goes to infinity, 
    \begin{equation}\label{eq:decroissance_green}
        \gr(x_0,y) = \frac{k' e^{|y-x_0| \chi(u_0)}}{2\sqrt{2 \pi |y - x_0| \chi''(u_0)}}(1 + o(1)).
    \end{equation}
    Besides, when $y$ goes to infinity, the convergence of $u_0$ is implied by the convergence of the reduced coordinates of $y$, that is the $\ell^1$ direction of $y$ when lifted to $\Z^d$.

    In \eqref{eq:decroissance_green}, the decay is actually exponential. More precisely, \cite[Lemma 16]{BdTR} proves that 
    \begin{equation}\label{eq:expo_decay_inequality}
    \chi(u_0) \leq \log\left( k' \elliptnd\left( \frac{\varepsilon}{2} \right) \right) < 0.
    \end{equation}
\end{thm}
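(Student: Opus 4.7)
The plan is to derive a contour-integral representation for $\gr(x_0, y)$ and then analyze it by the saddle-point method. The starting point is the observation that, for each $u$ in a suitable strip, the discrete exponential $y \mapsto \e_{(x_0, y)}(u)$ defined just above the theorem is a Laplacian eigenfunction of the massive Laplacian $\Delta_m$ on the isoradial graph, with an explicit (meromorphic) eigenvalue $\lambda(u)$. Inverting $\Delta_m$ by a Fourier-type ansatz then produces a formula of the shape
\[ \gr(x_0, y) = \frac{1}{2\pi i} \oint_\Gamma \frac{\e_{(x_0,y)}(u)}{\lambda(u)} \, du, \]
where $\Gamma$ is a horizontal loop in a fundamental rectangle of the Jacobi functions. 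Since the graph is quasicrystalline, the product in the definition of $\e_{(x_0,y)}$ depends on $y$ only through the signed counts $n_j(y)$ of edges in the $d$ rhombus directions, so that writing $N := |y - x_0| = \sum_j |n_j(y)|$ and $\hat n := n(y)/N$,
\[ \e_{(x_0,y)}(u) = \exp\bigl( N \, \chi_{\hat n}(u) \bigr), \quad \chi_{\hat n}(u) = \sum_{j=1}^d \hat n_j \log\bigl( i\sqrt{k'}\, \elliptsc((u-\alpha_j)/2 \,|\, k) \bigr). \]

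The integral is thereby put in the standard Laplace form $\oint H(u)\, e^{N \chi_{\hat n}(u)} \, du$ with $H := 1/\lambda$. Using Assumption~\ref{assump:angles_not_too_small} to keep every $\alpha_j$ at distance at least $\varepsilon$ from the zeros and poles of $\elliptsc$, I would locate a unique critical point $u_0$ of $\chi_{\hat n}$ in the relevant strip, check that it is of descent type with $\chi_{\hat n}''(u_0) > 0$ after a suitable orientation of the deformed contour, and apply the one-variable Laplace expansion
\[ \gr(x_0, y) = H(u_0) \sqrt{\frac{2\pi}{N \chi''(u_0)}} \, e^{N \chi(u_0)} (1 + o(1)). \]
Evaluating $H(u_0)$ produces the explicit prefactor $k'/2$ of \eqref{eq:decroissance_green}. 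The continuity statement, namely that $u_0$ has a limit as the reduced coordinates $\hat n$ converge, is then an instance of the analytic implicit function theorem applied to the equation $\chi_{\hat n}'(u) = 0$, whose coefficients depend analytically on $\hat n$. The exponential bound \eqref{eq:expo_decay_inequality} reduces to an explicit estimate of $\log \elliptsc$ at the extremal admissible argument $\varepsilon/2$ and comparison with $\log(k' \elliptnd(\varepsilon/2))$.

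The main obstacle is making this saddle-point analysis uniform as $\hat n$ varies over the simplex of attainable $\ell^1$ directions. The doubly periodic character of the Jacobi integrand forces the contour to live on a torus, so one has to verify that as $\hat n$ moves the steepest-descent contour can be deformed inside a single fundamental domain without sweeping across any of the poles of $\e_{(x_0,y)}$ or of $H$. A related subtlety is that if $\hat n$ approaches a vertex of the admissible simplex of directions, the saddle may drift toward a pole of $\log \elliptsc$, so extra work is needed to bound the error term uniformly; this is also exactly the point at which the uniform lower bound $|\chi(u_0)|$ of \eqref{eq:expo_decay_inequality}, coming from Assumption~\ref{assump:angles_not_too_small}, becomes indispensable.
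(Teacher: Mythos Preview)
This theorem is not proved in the paper at all: it is quoted as \cite[Theorem~14]{BdTR}, and the surrounding text only names the method (``integral formula for the Green function and the saddle-point method'') without reproducing any argument. There is therefore no proof in the present paper to compare your proposal against.

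That said, your sketch is broadly in line with the approach the paper attributes to \cite{BdTR}: a contour-integral representation of $\gr(x_0,y)$ followed by a one-variable Laplace/steepest-descent expansion in the large parameter $N = |y-x_0|$, with uniformity in the direction $\hat n$ controlled via Assumption~\ref{assump:angles_not_too_small}. One conceptual point deserves correction, however. In \cite{BdTR} the discrete exponentials $y \mapsto \e_{(x_0,y)}(u)$ lie in the \emph{kernel} of the massive Laplacian $\Delta^m$ (they are massive harmonic functions for every $u$), not eigenfunctions with a nonzero eigenvalue $\lambda(u)$. Consequently the local integral formula for the Green function does not have the Fourier-inversion shape $\oint \e_{(x_0,y)}(u)/\lambda(u)\,du$ that you wrote; the integrand is essentially $\e_{(x_0,y)}(u)$ itself (times an explicit constant), and the inversion of $\Delta^m$ is achieved through the contour/residue structure rather than by dividing by a symbol. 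This does not affect the downstream saddle-point analysis you outlined, but the mechanism you describe is the periodic-lattice Fourier picture rather than the isoradial one actually used in \cite{BdTR}.
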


The expression of $\chi$ will be given when necessary.

The inverse of the Green function is the massive Laplacian (this is even the definition of the Green function in \cite{BdTR}).

\begin{defi}\label{def:laplacien_massique}
    The massive Laplacian $\Delta^m$ acts on functions $f : V \to \C$ by
    \begin{align}
        \left(\Delta^m f\right)(x) &= \sum_{y \in V} \rho(xy) (f(x) - f(y)) + m^2(x) f(x) \\
        &= f(x) \left( \sum_{y \in V} \rho(xy) + m^2(x)\right) - \sum_{y \in V} \rho(xy) f(y).
    \end{align}
    In terms of matrices indexed by $V$, it means that 
    \begin{equation}
        \Delta^m_{x,y} = \left(\Delta^m \delta_y\right)(x) = \left\{ \begin{array}{ll}
            - \rho(xy) & \text{if $x \neq y$,} \\
             \sum_{z \in V} \rho(xz) +m^2(x) & \text{if $x = y$.}
        \end{array} \right.
    \end{equation}
\end{defi}

Since the graph $G$ is locally finite, all the sums written before are finite. This ensures that the massive Laplacian is well defined. It is clearly symmetric.

\section{Leaky sandpile model on isoradial graphs}\label{sec:sandpile_model}
In this section, we define properly the leaky Abelian sandpile model on the graph $G$. Up to the author's knowledge, it is the first time that this model is studied on isoradial graphs. With the help of the odometer function, we define the shape of the final stable configuration. Applying a modified version of the Laplacian to the odometer function, we obtain Proposition~\ref{prop:threshold}, that bounds the boundary of the shape of the sandpile between two level curves of the Green function.

\subsection{Description of the model}
On any locally finite graph with conductances and masses, we can define a leaky sandpile model as follows. A sandpile configuration on $G=(V,E)$ is a function $s : V \to [0, + \infty)$.

\begin{model}[Leaky Abelian Sandpile Model (LASM) on $G$]\label{model:sandpile_article3}
We fix an origin vertex $x_0 \in V$. Let $N \in \N$.
    \begin{itemize}
        \item The initial configuration is $s_0 := N \delta_{x_0}$. It consists of $N$ grains of sand at the origin.
        \item A sandpile configuration $s$ is said to be \emph{stable} at site $x \in V$ if: \[s(x) < \sum_{y \in V} \rho(xy) + m^2(x).\] We say that a configuration is stable if it is stable at every site. Otherwise, we say it is unstable.
        \item As long as a configuration is unstable at site $x$, it can topple, which means:
        \begin{align*}
        &s(x) \leftarrow s(x) -  \sum_{y \in V}\rho(xy) - m^2 (x),\\
        &\forall y \in V \setminus \{x\},\quad  s(y) \leftarrow s(y) + \rho(xy).
        \end{align*}
        \item Sites topple until the configuration eventually becomes stable.
    \end{itemize}
\end{model}

The mass $m^2(x)$ is the amount of sand lost each time the site $x$ topples.

A classical property of the model is that, if at some point in the toppling process, several sites can topple, then the order chosen does not modify the eventual stable configuration (see \cite{Hol_survey}).

\begin{defi}
    The odometer function is the function $u : V \to \R_{\geq 0}$ defined by 
    \begin{equation}
        u(x) = \text{total amount of sand emitted from $x$ until stabilization}.
    \end{equation}
    It takes into account both the sand sent to neighbors \emph{and} the sand that disappeared due to leakiness.
\end{defi}

We use the odometer function to define the shape of the final configuration of the sandpile.

\begin{defi}
    We say that a vertex $x \in V$ is in the shape of the final stable configuration of the sandpile if $u(x) >0$.
\end{defi}

\subsection{Link between the shape of the sandpile and the Green function}

In order to combine the Laplacian with the odometer function, we need to slightly modify the Laplacian so that it involves \emph{proportions of sand emitted} instead of \emph{amounts} of sand, which comes down to using the Laplacian of the random walk instead of the Laplacian $\Delta$ of the graph.

\begin{defi}\label{sandpile_isoradial:laplacian}
    We define an operator $T$ that acts on functions $f : V \to \C$ by 
    \begin{align}\label{eq:operator_T_def}
        (Tf)(x) &= \sum_{y \in V} \frac{\rho(xy)}{\sum_{z \in V} \rho(yz) +m^2(y)} f(y) - f(x)\\ 
        &= \sum_{y \in V} \mathbb{P}(y \to x) f(y) - f(x).
    \end{align}
    In terms of matrices indexed by $V$, it means that 
    \begin{equation}
        T_{x,y} = \left\{ \begin{array}{ll}
            \frac{\rho(xy)}{\sum_{z \in V} \rho(yz) + m^2(y)} & \text{if $x \neq y$,}  \\
             -1&\text{if $x = y$.} 
        \end{array} \right.
    \end{equation}
    It comes down to right-multiplying the Laplacian of the graph by the diagonal matrix $-D^{-1}$ where $D$ was introduced in \eqref{eq:diagonal_matrix_potential_green}.
\end{defi}

As a direct consequence of the link between the Green function and the Laplacian, we get the following property for the potential $U$ and the operator $T$.

\begin{prop}\label{prop:operators_inverse}
    The operators $T$ and the potential $U$ satisfy $TU^{\intercal} = U^{\intercal}T = - \id$.
\end{prop}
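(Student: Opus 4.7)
The plan is to rewrite $T$ in terms of the transition matrix $P$ of the killed random walk and then invoke the classical identity $(I-P)U=U(I-P)=I$, whose convergence is guaranteed by the uniform killing probability from Lemma~\ref{lem:diagonal_laplacian_bound}.

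First I would introduce the (sub-stochastic) transition matrix $P$ with entries $P_{x,y} = \mathbb{P}(x \to y) = \frac{\rho(xy)}{\sum_z \rho(xz)+m^2(x)}$, so that $\mathbb{P}_x(X_n=y)=(P^n)_{x,y}$, and by definition of the potential $U = \sum_{n\geq 0} P^n$. Comparing the matrix entries, since there are no self-loops ($\rho(xx)=0$), the definition of $T$ yields
\begin{equation}
    T_{x,y} = \mathbb{P}(y\to x) - \delta_{x,y} = (P^{\intercal})_{x,y} - I_{x,y},
\end{equation}
i.e.\ $T = P^{\intercal} - I$.

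Next I would establish the telescoping identity. Because $\mathbb{P}(x\to \mathrm{kill})>\delta$ uniformly in $x$ by \eqref{eq:proba_kill_bounded_away_from_zero}, the row sums of $P^n$ are bounded by $(1-\delta)^n$, so all matrix products below are absolutely convergent row-by-row and may be manipulated entrywise. The standard computation
\begin{equation}
    (I-P)\sum_{n=0}^{N} P^n = \sum_{n=0}^{N} P^n - \sum_{n=1}^{N+1} P^n = I - P^{N+1}
\end{equation}
gives, after letting $N\to\infty$ and using $P^{N+1}\to 0$ entrywise, $(I-P)U = U(I-P) = I$. Transposing this identity (which is legitimate entrywise, and which only requires the same convergence) yields
\begin{equation}
    (I - P^{\intercal})U^{\intercal} = U^{\intercal}(I - P^{\intercal}) = I.
\end{equation}

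Finally I would conclude by a direct substitution: since $T = P^{\intercal}-I = -(I-P^{\intercal})$, we get
\begin{equation}
    TU^{\intercal} = -(I-P^{\intercal})U^{\intercal} = -I, \qquad U^{\intercal}T = -U^{\intercal}(I-P^{\intercal}) = -I.
\end{equation}
The only subtle point is the justification of these infinite sums and transpositions; it is entirely handled by the exponential decay from Lemma~\ref{lem:diagonal_laplacian_bound}, so no real obstacle arises.
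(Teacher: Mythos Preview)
Your proof is correct, but it follows a different route from the paper's. The paper argues purely algebraically from the identities already set up in Section~\ref{sec:background_isoradial}: using $T=-\Delta D^{-1}$, $\gr=UD^{-1}$, the symmetry of $\Delta$ and $D$, and the fact (imported from \cite{BdTR}) that $\gr=\Delta^{-1}$, one line of matrix manipulations gives $T^{-1}=-U^{\intercal}$. Your argument instead works directly with the probabilistic series $U=\sum_{n\geq 0}P^n$ and the telescoping identity $(I-P)U=U(I-P)=I$, then transposes. The paper's version is shorter but leans on $\gr=\Delta^{-1}$ as a black box; yours is more self-contained and makes the role of the uniform killing bound \eqref{eq:proba_kill_bounded_away_from_zero} explicit in guaranteeing convergence, at the cost of a slightly longer write-up.
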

\begin{proof}
    We use $\gr = \Delta^{-1}$, $T = -\Delta D^{-1}$, $\gr = UD^{-1}$ and the symmetry of $D$ and $\Delta$ to get
    \begin{equation*}
        T^{-1} = \left( -  \Delta D^{-1} \right)^{-1} 
        = -D \Delta^{-1}
        = -D \left(\Delta^{-1}\right)^\intercal
        = - D \gr^\intercal
        =-D D^{-1} U^\intercal
        = -U^\intercal.\qedhere
    \end{equation*}
\end{proof}

The following result links the twisted Laplacian $T$ with the odometer function $u$ and the initial and final configurations. Although its proof is very simple, it is a key result to establish the link between the shape of the sandpile and the asymptotics of the Green function.
We fixed the initial configuration to be $N \delta_{x_0}$, but the result is true for any finite initial configuration, so we decide to state it in full generality. 
\begin{prop}\label{prop:operator_applied_to_odometer}
    Let $s_0 : V \to [0, + \infty)$ be any initial configuration with a finite total amount of sand. Let $u$ and $f$ be the odometer function and the final configuration obtained from $s_0$. One has 
    \begin{equation}
        Tu = f - s_0.
    \end{equation}
\end{prop}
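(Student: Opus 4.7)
The plan is to prove the identity pointwise at every vertex $x \in V$ by a conservation-of-sand argument. The key translation from ``amount of sand emitted'' (as in the odometer) to ``proportion of sand received'' (as in the operator $T$) is provided by the toppling rule: one toppling at a site $z$ emits exactly $\sum_w \rho(zw) + m^2(z)$ units of sand, of which a fraction $\rho(zy)/(\sum_w \rho(zw)+m^2(z)) = \mathbb{P}(z \to y)$ is delivered to each neighbor $y$ and a fraction $\mathbb{P}(z \to \text{kill})$ is lost to leakiness.

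Fix $x \in V$ and let $N_x$ denote the total number of topplings performed at $x$ during stabilization. I would first verify that $N_x$ is finite. By Lemma~\ref{lem:diagonal_laplacian_bound}, every toppling at a vertex $z$ dissipates at least $m^2(z) \geq \eta$ units of sand for some uniform $\eta > 0$ (extracted from the proof of the lemma). Since the total sand can only decrease, the cumulative leaked mass $\sum_z N_z\, m^2(z)$ is bounded by $\sum_z s_0(z) < +\infty$, so in particular $\sum_z N_z < +\infty$, and a fortiori each $N_x$ is finite. The classical Abelian property of the sandpile further guarantees that $N_x$ does not depend on the order in which unstable vertices are toppled. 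Combining the definition of $u$ with the toppling rule, one obtains $u(x) = N_x \bigl( \sum_z \rho(xz) + m^2(x) \bigr)$.

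With this accounting in hand, the sand balance at $x$ reads: the final amount equals the initial amount, minus the total emitted from $x$, plus the total received from every neighbor,
\begin{equation*}
    f(x) \;=\; s_0(x) \;-\; u(x) \;+\; \sum_{y \in V} N_y \,\rho(yx)
    \;=\; s_0(x) \;-\; u(x) \;+\; \sum_{y \in V} \frac{\rho(yx)}{\sum_z \rho(yz) + m^2(y)}\, u(y),
\end{equation*}
where I plugged in the expression $N_y = u(y)/(\sum_z \rho(yz) + m^2(y))$. Using the symmetry $\rho(yx) = \rho(xy)$ and comparing with \eqref{eq:operator_T_def}, the right-hand side equals $s_0(x) + (Tu)(x)$, which gives the announced identity. (The sum over $y \in V$ causes no issue at $y=x$ since isoradial graphs carry no self-loops, so $\rho(xx) = 0$.)

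The proof is essentially bookkeeping; there is no analytic obstacle. The one substantive point is the finiteness of $N_x$, which is exactly where leakiness, via Lemma~\ref{lem:diagonal_laplacian_bound}, is crucial: without the uniform lower bound on $m^2$, the argument tracking total dissipated sand would break down.
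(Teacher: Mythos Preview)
Your argument is correct and follows essentially the same conservation-of-sand approach as the paper: interpreting $\frac{\rho(xy)}{\sum_z \rho(yz)+m^2(y)}\,u(y)$ as the total sand sent from $y$ to $x$ and reading $(Tu)(x)$ as ``received minus emitted'' at $x$. Your version is slightly more explicit in introducing the toppling counts $N_x$ and in checking their finiteness via the uniform mass lower bound, which the paper leaves implicit.
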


\begin{proof}
    By definition \eqref{eq:operator_T_def} of $T$, 
    \[(Tu)(x) = \sum_{y \in V} \frac{\rho(xy)}{\sum_{z \in V} \rho(yz) +m^2(y)} u(y) - u(x).\]
    But $\frac{\rho(xy)}{\sum_{z \in V} \rho(yz) +m^2(y)}$ is the proportion of the sand lost by $y$ that goes to $x$ each time $y$ topples. Therefore, multiplying by the total amount $u(y)$ lost by $y$, we see that $\frac{\rho(xy)}{\sum_{z \in V} \rho(yz) +m^2(y)} u(y)$ is the total amount of sand sent by $y$ to $x$, so $\sum_{y \in V} \frac{\rho(xy)}{\sum_{z \in V} \rho(yz) +m^2(y)} u(y)$ is the total amount received by $x$. In conclusion, $(Tu)(x)$ is the difference between what $x$ received and what it sent, that is, the difference between its final amount of sand and its initial amount of sand.
\end{proof}

In order to prove the main result of this section, that is Proposition~\ref{prop:threshold}, we need the following lemma.

\begin{lemma}\label{lem:bornage_somme_potentiel}
    There exists a constant $a>0$ such that, for every $x \in V$,
    \begin{equation}
        \sum_{y \in V} U(y,x) \leq a.
    \end{equation}
\end{lemma}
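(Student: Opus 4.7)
}

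The plan is to trade the sum over starting points $y$ for a sum over endpoints, using the symmetry of the Green function, and then control that second sum by the exponential killing rate. The key identity to exploit is that $U$ and $\gr$ differ only by right-multiplication by the diagonal matrix $D$ of \eqref{eq:diagonal_matrix_potential_green}, while $\gr$ itself is symmetric because the massive Laplacian $\Delta^m$ (Definition~\ref{def:laplacien_massique}) is visibly symmetric and $\gr = (\Delta^m)^{-1}$.

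Concretely, first I would write $U(y,x) = \gr(y,x)\, D_{xx}$ from \eqref{eq:lien_green_potential_matrix}, use $\gr(y,x) = \gr(x,y)$, and then convert back to $U$ on the right, obtaining
\begin{equation}
    U(y,x) = \frac{D_{xx}}{D_{yy}}\, U(x,y).
\end{equation}
By the uniform two-sided bound \eqref{eq:bounds_diagonal_laplacian} of Lemma~\ref{lem:diagonal_laplacian_bound}, $D_{xx}/D_{yy} \leq c'/c$ uniformly in $x,y$, so
\begin{equation}
    \sum_{y \in V} U(y,x) \leq \frac{c'}{c} \sum_{y \in V} U(x,y).
\end{equation}

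The second step is to bound $\sum_{y \in V} U(x,y)$ independently of $x$. By definition of $U$ as the Green potential,
\begin{equation}
    \sum_{y \in V} U(x,y) = \sum_{n=0}^{+\infty} \mathbb{P}_x(X_n \text{ is alive}).
\end{equation}
Since the killing probability at every site is at least $\delta>0$ by \eqref{eq:proba_kill_bounded_away_from_zero}, the probability that the walk survives $n$ steps is at most $(1-\delta)^n$, so the series is bounded by $1/\delta$. Taking $a := c'/(c\delta)$ concludes the proof.

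I do not anticipate a genuine obstacle here: the argument is short and uses only the symmetry of $\Delta^m$, the relation \eqref{eq:lien_green_potential_matrix} between $U$ and $\gr$, and the uniform bounds of Lemma~\ref{lem:diagonal_laplacian_bound}. The only point that deserves a line of justification is the symmetry $\gr(x,y)=\gr(y,x)$, which follows directly from the matrix form of $\Delta^m$ and the symmetry of $\rho$.
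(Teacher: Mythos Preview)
Your proposal is correct and follows essentially the same argument as the paper: both use the symmetry of $\gr$ together with the uniform bounds on $D$ from Lemma~\ref{lem:diagonal_laplacian_bound} to reduce $\sum_y U(y,x)$ to $\sum_y U(x,y)$, and then bound the latter by the geometric series $\sum_n (1-\delta)^n$ coming from the uniform killing rate. Your derivation of the exact identity $U(y,x) = \frac{D_{xx}}{D_{yy}} U(x,y)$ is slightly more precise than the paper's use of \eqref{eq:encadrement_Gr_V}, but the two arguments are otherwise identical.
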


\begin{proof}
    Using the inequalities of \eqref{eq:encadrement_Gr_V} and the symmetry of $\gr$, we get \[U(y,x) \leq \frac{c'}{c}U(x,y).\]
    Therefore, it is enough to bound $\sum_{y \in V} U(x,y)$ instead.
    But 
    \begin{align*}
        \sum_{y \in V} U(x,y) &= \sum_{y \in V} \sum_{n=0}^{+ \infty} \mathbb{P}_x(X_n = y) \\
        &= \sum_{n=0}^{+ \infty} \sum_{y \in V} \mathbb{P}_x(X_n = y) \\
        &= \sum_{n = 0}^{+\infty} \mathbb{P}_x\left( \text{$\left(X_k\right)_{k \geq 0}$ was not already killed at step $n$} \right) \\
        & \leq  \sum_{n = 0}^{+\infty} \left(1 - \delta \right)^n,
    \end{align*}
    where the last term does not depend on $x$ and comes from Lemma~\ref{lem:diagonal_laplacian_bound}. 
\end{proof}

\begin{prop}\label{prop:threshold}
    There exist constants $\alpha, \beta >0$ such that: 
    \begin{itemize}
            \item if $\gr(x_0,x) > \frac{\alpha}{N}$, then $x$ is in the shape of the final configuration;
            \item if $\gr (x_0,x)< \frac{\beta}{N}$, then $x$ is not in the shape of the final configuration.
        \end{itemize}
\end{prop}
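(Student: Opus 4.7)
The plan is to start from the relation $Tu = f - N\delta_{x_0}$ given by Proposition~\ref{prop:operator_applied_to_odometer} and invert it using Proposition~\ref{prop:operators_inverse}, which says $T^{-1} = -U^\intercal$. This yields the pointwise identity
\begin{equation}\label{eq:planstar}
u(x) = N\,U(x_0,x) - \sum_{y \in V} U(y,x)\,f(y).
\end{equation}
This formula makes sense because leakiness forces termination in finitely many topplings: each toppling costs at least $\min_x m^2(x) > 0$ grains (see the proof of Lemma~\ref{lem:diagonal_laplacian_bound}), so starting from $N$ grains both $u$ and $f$ are supported on a finite set. Throughout I exploit the two-sided comparison $c\,\gr \leq U \leq c'\,\gr$ of \eqref{eq:encadrement_Gr_V}.

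For the $\alpha$ threshold (sufficient condition), I bound the correction term in \eqref{eq:planstar} from above. By stability and Lemma~\ref{lem:diagonal_laplacian_bound}, $f(y) < \sum_z \rho(yz) + m^2(y) \leq c'$, and Lemma~\ref{lem:bornage_somme_potentiel} gives $\sum_{y} U(y,x) \leq a$ uniformly in $x$. Combined with $U(x_0,x) \geq c\,\gr(x_0,x)$, this yields
\[
u(x) \geq N c\,\gr(x_0,x) - c' a.
\]
Choosing $\alpha := c'a/c$, the hypothesis $\gr(x_0,x) > \alpha/N$ forces $u(x) > 0$, so $x$ is in the shape.

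For the $\beta$ threshold (necessary condition), the key observation is that toppling is \emph{quantized}: whenever $x$ topples once it emits at least $\sum_z \rho(xz) + m^2(x) \geq c$ grains, so $u(x) \in \{0\} \cup [c,+\infty)$. On the other hand, simply dropping the nonnegative term in \eqref{eq:planstar} and using $U \leq c'\,\gr$ gives the crude upper bound $u(x) \leq N c'\,\gr(x_0,x)$. Hence if $x$ lies in the shape, $c \leq u(x) \leq N c'\,\gr(x_0,x)$, i.e., $\gr(x_0,x) \geq (c/c')/N$. Contrapositively, $\beta := c/c'$ works.

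The identity \eqref{eq:planstar} and the comparison \eqref{eq:encadrement_Gr_V} reduce both bullet points to short estimates. The only genuinely conceptual step is the quantization argument used for the $\beta$ threshold, which relies crucially on the uniform positivity of $m^2$ (a consequence of Assumption~\ref{assump:angles_not_too_small} via the analysis in the proof of Lemma~\ref{lem:diagonal_laplacian_bound}); if $m^2$ could vanish one could imagine arbitrarily small topplings and this direction would break down.
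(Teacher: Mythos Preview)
Your proof is correct and follows essentially the same approach as the paper's: invert $Tu = f - N\delta_{x_0}$ via $T^{-1}=-U^{\intercal}$, bound the correction using stability ($f\leq c'$) together with Lemma~\ref{lem:bornage_somme_potentiel}, and use the quantization $u(x)\in\{0\}\cup[c,\infty)$ for the other direction, then pass from $U$ to $\gr$ via \eqref{eq:encadrement_Gr_V}. One small quibble with your closing commentary: the quantization bound $u(x)\geq c$ comes from the lower bound $\sum_z\rho(xz)+m^2(x)\geq c$ in Lemma~\ref{lem:diagonal_laplacian_bound}, which is driven by $\rho\geq\elliptsc(\varepsilon\mid k)>0$ rather than by the positivity of $m^2$; the role of $m^2>0$ is rather to guarantee finite termination and the finiteness of $\sum_y U(y,x)$.
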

\begin{proof}
    Because of the bounds of \eqref{eq:encadrement_Gr_V}, it is enough to prove the same result with the potential $U$ instead of the Green function $\gr$.

    Using Propositions~\ref{prop:operators_inverse} and \ref{prop:operator_applied_to_odometer}, we obtain
    \[T \left( \frac{1}{N} u -U(x_0, \cdot)  \right) = \frac{1}{N}f.\]
    In the final configuration, no vertex $x$ has more than the threshold $\sum_{y \in V} \rho(xy) + m^2(x)$ grains of sand, therefore, by Lemma~\ref{lem:diagonal_laplacian_bound}, 
     \[0 \leq T \left( \frac{1}{N} u -U(x_0, \cdot) \right) \leq \frac{c'}{N} \mathds{1}\]
     where $\mathds{1} : x \in V \mapsto 1$.
     Left-multiplying by $T^{-1} = - U^\intercal$, which switches inequalities by negativity, we get
     \[\forall x \in V,\quad 0 \geq \frac{1}{N} u(x) - U(x_0,x) \geq - \frac{c'}{N} \left(U^\intercal\mathds{1}\right)(x).\]
     But $\left(U^\intercal\mathds{1}\right)(x) = \sum_{y \in V} U(y,x)$, so $\left(U^\intercal\mathds{1}\right)(x) \leq a$ by Lemma~\ref{lem:bornage_somme_potentiel}.
     Therefore, setting $\alpha = c'a >0$, we get
     \begin{equation}\label{eq:threshold_proof}
     \forall x \in V,\quad 0 \geq \frac{1}{N} u(x) - U(x_0,x) \geq -\frac{\alpha}{N}.
     \end{equation}
     
     If $x$ is not in the shape of the final configuration, then $u(x) = 0$, hence \eqref{eq:threshold_proof} yields $U(x_0,x) \leq \frac{\alpha}{N}$. Conversely, if $x$ is in the shape, then it has emitted sand at least once, therefore \[u(x) \geq \sum_{y \in V} \rho(xy) + m^2(x) \geq c,\] where the last inequality comes from Lemma~\ref{lem:diagonal_laplacian_bound}, so \eqref{eq:threshold_proof} yields $U(x_0,x) \geq \frac{c}{N}$.
\end{proof}

\section{Limit shapes of the sandpile}\label{sec:limit_shape}
\begin{comment}
Under this assumption, if we fix the origin $x_0$, any vertex $y$ of the diamond graph $G^\diamond$ has coordinates $n(y) = (n_1(y), \ldots, n_d(y)) \in \Z^d$.
%, where the $n_j(y)$ are the number of (oriented) edges of each direction used to join $x_0$ to $y$. 
In \cite[Lemma 9]{BdTR}, it is proved that for every edge $(x,y)$, $e_{(y,x)} = \frac{1}{e_{(x,y)}}$. Therefore, for all $u \in \C$,
\begin{align*}
    e_{(x_0,y)}(u) &=  \prod_{j=1}^d \left( i \sqrt{k'} \elliptsc\left(  \frac{u-\alpha_j}{2} \right) \right)^{n_j(y)}\\
    &= \exp \left( \sum_{j=1}^d n_j(y) \log \left( i \sqrt{k'} \elliptsc\left(  \frac{u-\alpha_j}{2} \right) \right) \right) \\
    &= \exp \left( n(y) \cdot \gamma(u) \right)\\
\end{align*}
where $\gamma(u) = \left( \log \left( i \sqrt{k'} \elliptsc\left(  \frac{u-\alpha_j}{2} \right) \right) \right)_{1 \leq j \leq d}$.
\textcolor{red}{détermination du log complexe ?}
\end{comment}

From now on, we will only work in the quasicrystalline case, which was introduced in Definition~\ref{def:quasicrystalline}, so that we can use Theorem~\ref{thm:asymp_green}. 

\begin{assump}
    The graph is quasicrystalline.
\end{assump}

\subsection{Limit shape in $\R^d$}\label{subsec:shape_r^d}
\subsubsection{Limit shape in $\R^d$ when $N$ goes to infinity}
As mentioned earlier, under the quasicrystalline assumption, the graph can be seen as the projection onto the plane of a monotone surface of $\Z^d$, which we denote $S$. By lifting the graph onto $S$, the distance $|\cdot|$ in the asymptotics of Theorem~\ref{thm:asymp_green} is the usual $\ell^1$ distance in $\R^d$. Up to translation, we can assume that the origin $x_0$ is the origin of $\R^d$. Besides, the function $\chi$ is explicitly known: if we denote $n = (n_1(y), \ldots, n_d(y))$ the coordinates of $y$ in $\Z^d$, then \cite[Eq. (24)]{BdTR} gives
\begin{equation}
    \|y\|_1 \chi(u) = \sum_{j=1}^d n_j(y) \log \left( \sqrt{k'} \elliptnd\left( \frac{u-\alpha_j}{2} \right) \right) = -n \cdot \theta(u)
\end{equation}
where 
\begin{equation}\label{eq:def_theta}
    \theta(u) = -\left( \log \left( \sqrt{k'} \elliptnd\left( \frac{u-\alpha_1}{2} \right) \right), \ldots, \log \left( \sqrt{k'} \elliptnd\left( \frac{u-\alpha_d}{2} \right) \right) \right) .
\end{equation}
The asymptotics of Theorem~\ref{thm:asymp_green} becomes 
\begin{equation}\label{eq:decroissance_green_origin}
        \gr(x_0,y) = \frac{k' e^{-n(y) \cdot \theta(u_0)}}{2\sqrt{2 \pi \|n(y)\|_1 \chi''(u_0)}}(1 + o(1)),
    \end{equation}
and \cite{BdTR} proves that $u_0$ depends continuously on the direction $ \widehat{y} =\frac{n(y)}{\|n(y)\|_1}$ and will therefore be noted $u_{\hat{y}}$.

In that setting, we will get a limit shape of the sandpile in $\R^d$. Indeed, \cite[Section 5]{BBMR} proves the following.
\begin{thm}\label{thm:shape_sandpile}
    For $z \in \R^d \setminus \{0\}$, we denote $\widehat{z} = \frac{z}{\|z\|}$.
    Let $\alpha, \beta >0$ and $f : \R^d \to \R$ such that \[f(z) = c\left( \widehat{z} \right) \|z\|^{-\frac{1}{2}} e^{-z \cdot \gamma\left( \widehat{z} \right)}(1+o(1))\]
    as $\|z\|$ goes to infinity, where $c, \gamma : \mathbb{S}^{d-1} \to \R$ are continuous functions, $c$ is positive and $z \cdot \gamma\left( \widehat{z} \right) >0$ for each $z \in \R^d$.
    For $s \in \mathbb{S}^{d-1}$, we denote
    \begin{equation*}
        r_{N, s} = \inf \left\{r>0~|~ f(rs)  \leq \frac{\alpha}{N} \right\}\quad\text{and}\quad
        R_{N, s} = \sup \left\{ r>0~|~ f(rs) \geq \frac{\beta}{N} \right\}.
    \end{equation*}
    As $N$ goes to infinity,
    \begin{equation}
        \lim_{N \to +\infty}\frac{r_{N,s}}{\log N} = \lim_{N \to +\infty} \frac{R_{N,s}}{\log N} = \frac{1}{\gamma(s) \cdot s}
    \end{equation}
    and these limits are uniform in $s \in \mathbb{S}^{d-1}$.

    This implies that for any sequence of sets $\left(E_N\right)_{N \in \N}$ such that for every $N \in \N$,
    \[\left\{ rs~|~ s \in \mathbb{S}^{d-1},~ 0 \leq r \leq r_{N,s}\right\} \subset E_N \subset \left\{ rs~|~ s \in \mathbb{S}^{d-1},~ 0 \leq r \leq R_{N,s}\right\},\]
    $\frac{1}{\log N} E_N$ converges to 
    \begin{equation}\label{eq:limit_set}
        E :=\left\{ rs~|~ s \in \mathbb{S}^{d-1},~ 0 \leq r_s \leq \frac{1}{\gamma(s) \cdot s} \right\}
    \end{equation}
    for the Hausdorff distance.
\end{thm}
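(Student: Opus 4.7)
The plan is to reduce everything to radial scalar asymptotics by taking logarithms, and then to propagate uniformity in the direction $s$ from the compactness of $\mathbb{S}^{d-1}$. Write $\mu(s) := \gamma(s) \cdot s$ and $\rho(s) := 1/\mu(s)$; by continuity and the positivity hypothesis on the compact sphere, there exist $0 < \mu_- \leq \mu_+ < +\infty$ and $0 < c_- \leq c_+ < +\infty$ bounding $\mu$ and $c$ from both sides. Applying $-\log$ to the hypothesis produces, uniformly in $s$,
\begin{equation*}
    -\log f(rs) = r\,\mu(s) + \tfrac{1}{2} \log r - \log c(s) + o(1) \qquad \text{as } r \to +\infty,
\end{equation*}
where the uniformity of the remainder in $s$ is exactly what the saddle-point expansion behind Theorem~\ref{thm:asymp_green} provides.

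For the upper bound on $R_{N,s}$, fix $\varepsilon > 0$ and set $r^+ = (1+\varepsilon)\log N / \mu(s)$. Plugging this into the asymptotic gives $f(r^+ s) \leq 2 c_+ (r^+)^{-1/2} N^{-1-\varepsilon}$ for $N$ large enough, uniformly in $s$, hence $f(r^+ s) < \beta / N$. Since the leading term $c(s)\, r^{-1/2} e^{-r\mu(s)}$ is strictly decreasing in $r$ for $r \geq 1/(2\mu_-)$, the inequality propagates to every $r \geq r^+$, so $R_{N,s} \leq r^+$. Symmetrically, with $r^- = (1-\varepsilon)\log N / \mu(s)$, the asymptotic yields $f(r^- s) \geq \tfrac{c_-}{2} (r^-)^{-1/2} N^{-1 + \varepsilon \mu_-}$, which dominates $\alpha / N$ for large $N$ uniformly in $s$; monotonicity of the leading term again propagates the inequality $f(rs) > \alpha / N$ to every $r \in [R_0, r^-]$, where $R_0$ is fixed large enough for the asymptotic to be valid. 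For $r \in [0, R_0]$, one uses that $f$ is positive and continuous, hence bounded below by a positive constant on this compact set (in the sandpile application $f$ is the Green function, which satisfies this), so $f(rs) > \alpha/N$ for $N$ large. Therefore $r_{N,s} \geq r^-$.

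Dividing by $\log N$ and then letting $\varepsilon \to 0$ gives that both $r_{N,s}/\log N$ and $R_{N,s}/\log N$ lie in an interval $[\rho(s) - o(1),\ \rho(s) + o(1)]$ with the $o(1)$ uniform in $s$, which is the announced limit. The Hausdorff convergence of $\frac{1}{\log N} E_N$ to $E$ follows immediately: the outer radius of $\frac{1}{\log N} E_N$ in direction $s$ is at most $R_{N,s}/\log N$ and its inner radius at least $r_{N,s}/\log N$, both converging uniformly to $\rho(s)$, so the Hausdorff distance to $E = \{ rs : s \in \mathbb{S}^{d-1},\ 0 \leq r \leq \rho(s)\}$ is controlled by these two uniform errors. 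The main obstacle is twofold: first, ensuring that the error term in the asymptotic for $f$ is genuinely uniform in the direction $s$ (which must be read off the saddle-point argument producing it, not just stated at fixed direction); second, handling the small-$r$ regime where the asymptotic formula does not apply, for which one must invoke positivity of $f$ on compact sets.
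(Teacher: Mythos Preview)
Your argument is essentially correct and takes a different route from the paper. The paper does not prove this theorem in detail: it cites \cite[Section~5]{BBMR} and sketches that one restricts to a fixed direction $s$, inverts the radial asymptotic $r \mapsto c(s)\,r^{-1/2}e^{-r\mu(s)}$ via the Lambert $W$ function, and reads off the known asymptotics of $W$ at the levels $\alpha/N$ and $\beta/N$. You bypass the special function entirely by a sandwich argument, evaluating the asymptotic at the test radii $r^\pm=(1\pm\varepsilon)\log N/\mu(s)$ and using monotonicity of the leading term to propagate. This is more elementary and loses nothing at the level of precision claimed; the Lambert $W$ route would become preferable only if one wanted lower-order corrections in $\log N$.

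Two small points. First, the uniformity in $s$ of the $o(1)$ is already part of the hypothesis (the asymptotic is stated as $\|z\|\to\infty$, not along a fixed ray), so there is no need to invoke the saddle-point analysis behind Theorem~\ref{thm:asymp_green}; keep the argument self-contained. Second, your lower-bound exponent should read $N^{-1+\varepsilon}$ rather than $N^{-1+\varepsilon\mu_-}$: since $r^-\mu(s)=(1-\varepsilon)\log N$ exactly, no $\mu_-$ appears. Your treatment of the small-$r$ regime is honest: the abstract hypotheses do not force $f$ to be bounded below on $\{\|z\|\leq R_0\}$, and you correctly note that in the intended application $f$ is the Green function, which is positive there.
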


Let us remind briefly how this result is obtained. We restrict $f$ to a fixed direction $s \in \mathbb{S}^{d-1}$. If $f$ is replaced by its asymptotic equivalent, then it is invertible in direction $s$ and its inverse is easily expressed with a classical special function, the Lambert W function, which asymptotics are well known. By evaluating these asymptotics at $\frac{\alpha}{N}$ and $\frac{\beta}{N}$, we obtain the expected limit $\frac{1}{\gamma(s) \cdot s}$, and \cite[Section 5]{BBMR} proves that it is still true for the actual function $f$ instead of its asymptotics.

In our context, $f$ is the Green function and the exponential decay $\gamma$ is $\widehat{z} \mapsto \theta\left( u_{\hat{z}} \right)$, where $\theta$ and $u_{\hat{z}}$ are defined in \eqref{eq:def_theta} and the following lines. Seeing the Green function as a function defined on $S$,
Proposition~\ref{prop:threshold} shows that, when lifted onto $S$, the sandpile is between the sets 
\[ S \cap\left\{ rs~|~ s \in \mathbb{S}^{d-1},~ 0 \leq r \leq r_{N,s}\right\} \quad \text{and}  \quad S \cap \left\{ rs~|~ s \in \mathbb{S}^{d-1},~ 0 \leq r \leq R_{N,s}\right\}.\]
As a consequence of Theorem~\ref{thm:shape_sandpile}, the limit shape of the sandpile in $\R^d$ is a \emph{subset} of the set $E$ defined in \eqref{eq:limit_set}. However, since we take the intersection with $S$, the limit shape may not be the whole of $E$, as some directions could be lacking in $S$ at infinity. This motivates the following definition of the directions that appear on $S$ at infinity.

\begin{defi}\label{defi:direction_admissible}
    A direction $s \in \mathbb{S}^{d-1}$ is called \emph{admissible} for the surface $S$ if there exists a sequence $\left(y_n\right)_{n \in \N}$ of $S$ such that $\|y_n\|_1 \xrightarrow[n \to + \infty]{}+\infty$ and $\frac{y_n}{\|y_n\|_1} \xrightarrow[n\to +\infty]{}s$.
\end{defi}

A direction is admissible if it appears at infinity on the monotone surface $S$. In other words, if $s$ is admissible, then there are points on the half-line $\R_+s$, arbitrarily large, that are close to $S$.  However, it might happen that, when moving along the half-line $\R_+ s$, we oscillate between being sometimes close to the surface and sometimes far from it. This means that after normalization by $\log N$, we oscillate between seeing the direction $s$ in $S$ and not seeing it as $N$ grows, making it impossible to get a limit shape in spherical coordinates as in \eqref{eq:limit_set}. To avoid these phenomenon, we will need to make regularity assumptions about the surface. We propose two different ones. The first one states that the distance between a point $ts$ and the surface cannot grow linearly in $t$ as $t$ goes to infinity. The second one, which is stronger, states that points from $\R_+ s$ must stay at bounded distance from the surface $S$.

\begin{assump}[Regularity assumption, weak version]\label{assump:regularity_weak}
     For every $\delta>0$ and every admissible direction $s$, there exists $T >0$ such that for $t >T$, $\mathrm{d}(ts, S) \leq \delta t$.
\end{assump}

\begin{assump}[Regularity assumption, strong version]\label{assump:regularity_strong}
    There exists a constant $c >0$ such that for every admissible direction $s$ and every $t>0$, $\mathrm{d}(ts, S) \leq c$.
\end{assump}

It is clear that the strong version implies the weak version (take $T = \frac{c}{\delta}$).

Combining Theorem~\ref{thm:shape_sandpile} and the regularity assumptions, we obtain convergence results for the shape of the sandpile on the monotone surface $S$ when the number $N$ of grains of sand tends to infinity. To get convergence, we need to do a proper normalization. As hinted by Theorem~\ref{thm:shape_sandpile}, this normalization is of order $\log N$, which is typical for the leaky sandpile model (see \cite{AM22, BBMR}), whereas for the non-leaky sandpile model on $\Z^d$, the normalization is in $N^{\frac{1}{d}}$.

\begin{thm}[Convergence under the weak regularity assumption]\label{thm:case_N_infinity_weak_regularity_assumption}
    We work under Assumption~\ref{assump:regularity_weak}.
    We denote $S_N \subset S \subset \R^d$, the shape of the sandpile started with $N$ grains of sand at $x_0$ (although the choice of $x_0$ does not change the limit shape). As $N$ goes to infinity, $\frac{1}{\log N} S_N$ converges to 
    \begin{equation}\label{eq:limit_shape_N_infinity}
        \left\{ r s ~\left|~ s \textnormal{ is admissible},~ 0 \leq r \leq \frac{1}{\theta(u_s) \cdot s} \right.\right\}
    \end{equation}
in the sense that: \[\forall s \textnormal{ admissible}, ~~\forall 0 \leq r \leq \frac{1}{\theta(u_s) \cdot s},~~\mathrm{d}\left( rs, \frac{1}{\log N} S_N \right) \xrightarrow[N \to +\infty]{}0.\]
\end{thm}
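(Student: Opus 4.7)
The plan is to combine two sandwich results already at hand: Proposition~\ref{prop:threshold} sandwiches $S_N$ between two level sets of $\gr(x_0,\cdot)$ at height of order $1/N$, while Theorem~\ref{thm:shape_sandpile}, applied to the asymptotic shape \eqref{eq:decroissance_green_origin} with rate $\gamma(\hat z)=\theta(u_{\hat z})$, gives a uniform-in-direction description of these level sets in terms of the radial profile $\tilde s\mapsto 1/(\theta(u_{\tilde s})\cdot \tilde s)$. The weak regularity Assumption~\ref{assump:regularity_weak} is then what bridges the gap between these continuous approximations in $\R^d$ and the actual vertices of the monotone surface $S$ on which $S_N$ lives.

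Fix an admissible direction $s\in\mathbb{S}^{d-1}$ and a radius $0\leq r<1/(\theta(u_s)\cdot s)$; the endpoint $r=1/(\theta(u_s)\cdot s)$ will be handled by continuity at the end, via $\mathrm{d}(rs,S_N/\log N)\leq (r-r')+\mathrm{d}(r's,S_N/\log N)$ for $r'<r$. Given $\varepsilon>0$, I would choose $\delta>0$ so small that, by continuity of $\tilde s\mapsto u_{\tilde s}$ and of $\tilde s\mapsto \theta(u_{\tilde s})\cdot\tilde s$, every direction $\tilde s$ with $\|\tilde s-s\|_1\leq 3\delta$ satisfies $1/(\theta(u_{\tilde s})\cdot\tilde s)\geq 1/(\theta(u_s)\cdot s)-\varepsilon$, and simultaneously $r(1+\delta)<1/(\theta(u_s)\cdot s)-2\varepsilon$. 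Applying Assumption~\ref{assump:regularity_weak} to the admissible direction $s$ with parameter $\delta$, and taking $t_N:=r\log N$, furnishes for $N$ large a vertex $y_N\in S$ with $\|y_N-t_N s\|_1\leq \delta t_N$. A direct triangle inequality then gives $\|y_N/\log N-rs\|_1\leq \delta r$, together with $\|y_N\|_1\in[(1-\delta)t_N,(1+\delta)t_N]$ and $\|\hat y_N-s\|_1\leq 3\delta$.

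It remains to verify that $y_N$ actually belongs to $S_N$. By the uniformity in Theorem~\ref{thm:shape_sandpile}, for $N$ large one has $r_{N,\hat y_N}/\log N\geq 1/(\theta(u_{\hat y_N})\cdot \hat y_N)-\varepsilon\geq 1/(\theta(u_s)\cdot s)-2\varepsilon$. Combined with $\|y_N\|_1\leq r(1+\delta)\log N<\bigl(1/(\theta(u_s)\cdot s)-2\varepsilon\bigr)\log N\leq r_{N,\hat y_N}$, the definition of $r_{N,\hat y_N}$ forces $\gr(x_0,y_N)>\alpha/N$, so Proposition~\ref{prop:threshold} yields $y_N\in S_N$. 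Hence $\mathrm{d}(rs,S_N/\log N)\leq \delta r$, and letting first $\delta\to 0$, then $r'\to r$ from below, completes the argument.

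The main obstacle, as I anticipate it, is bookkeeping rather than depth: $\varepsilon$ must be fixed first, then $\delta$ chosen in terms of $\varepsilon$ via continuity of the radial profile, and only then $N$ taken large enough, drawing simultaneously on the uniform convergence of Theorem~\ref{thm:shape_sandpile} and on the threshold $T$ produced by Assumption~\ref{assump:regularity_weak}. The delicate feature is that $\hat y_N\neq s$ in general, so the uniform-in-direction statement of Theorem~\ref{thm:shape_sandpile} is essential: without it one could not certify that $y_N$ falls strictly inside the threshold radius in its own direction $\hat y_N$, and the inclusion $y_N\in S_N$ would not be provable.
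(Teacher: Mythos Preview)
Your proposal is correct and follows essentially the same approach as the paper's own proof. Both arguments fix an admissible direction $s$ and an interior radius $r<1/(\theta(u_s)\cdot s)$, invoke the weak regularity assumption to locate a surface point $y_N$ within a small cone around $(\log N)\,rs$, and then use the uniform convergence of $r_{N,\tilde s}/\log N$ from Theorem~\ref{thm:shape_sandpile} together with Proposition~\ref{prop:threshold} to certify that $y_N\in S_N$; the paper packages this via the ball $(\log N)B(rs,\delta)$ lying inside the sublevel region $\{\tilde r\tilde s:\tilde r\le r_{N,\tilde s}\}$, whereas you track the direction $\hat y_N$ explicitly, but the content is the same (and your explicit handling of the endpoint $r=1/(\theta(u_s)\cdot s)$ is a small bonus the paper leaves implicit).
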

\begin{proof}
    Let $s \in \mathbb{S}^{d-1}$ admissible and $0 \leq r < \frac{1}{\theta(u_s) \cdot s}$. Let $\delta >0$ such that \[\delta < \mathrm{d}\left( rs, \left\{ \left. \frac{1}{\theta(u_{\tilde s}) \cdot \tilde s} \tilde s~\right|~\tilde s \textnormal{ is admissible} \right\}\right).\] Since $\frac{r_{N,s}}{\log N}$ converges uniformly to $\frac{1}{\theta(u_s) \cdot s}$, we have \[(\log N) B(rs, \delta) \subset \left\{ \tilde{r} \tilde s~|~ \tilde s \in \mathbb{S}^{d-1},~ 0 \leq \tilde{r} \leq r_{N,\tilde s}\right\}\] for $N$ large enough, where $B(\cdot, \cdot)$ are the balls with respect to the distance of Assumption~\ref{assump:regularity_weak}.
    But \[S \cap \left\{ \tilde{r} \tilde s~|~ \tilde s \in \mathbb{S}^{d-1},~ 0 \leq \tilde{r} \leq r_{N,\tilde s}\right\}\] is a subset of the sandpile started with $N$ grains of sand, as a consequence of Proposition~\ref{prop:threshold} and by definition of $r_{N,s}$. Since $s$ is admissible and by the regularity assumption, there are elements $y_N \in S \cap (\log N) B(rs, \delta)$ for $N$ large enough. These elements are thus in the shape of the sandpile started with $N$ grains of sand, so $\frac{y_N}{\log N}$ is in the rescaled sandpile. In conclusion, $B(rs, \delta)$ contains elements of the rescaled sandpile for $N$ large enough, so
    \[\limsup_{N \to + \infty} \mathrm{d}\left( rs, \frac{1}{\log N} S_N \right) \leq \delta.\]
    This is true for any $\delta$ small enough, so $\mathrm{d}\left( rs, \frac{1}{\log N} S_N \right) \xrightarrow[N \to +\infty]{}0$.
\end{proof}

\begin{thm}[Convergence under the strong regularity assumption]\label{thm:case_N_infinity_strong_regularity_assumption}
    Under Assumption~\ref{assump:regularity_strong}, the convergence to \eqref{eq:limit_shape_N_infinity} is uniform, in the sense that \begin{equation}
        \sup_{s \textnormal{ admissible},~~ 0 \leq r \leq \frac{1}{\theta(u_s) \cdot s}} \mathrm{d}\left(rs, \frac{1}{\log N} S_N \right) \xrightarrow[N \to + \infty]{}0.
    \end{equation}
\end{thm}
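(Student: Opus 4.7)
The plan is to upgrade the pointwise argument from the proof of Theorem~\ref{thm:case_N_infinity_weak_regularity_assumption} to a uniform one by exploiting the key feature of Assumption~\ref{assump:regularity_strong}: the constant $c$ controlling the distance from an admissible ray to $S$ is independent of the direction, so after rescaling by $\log N$ all such errors become $O(1/\log N)$ uniformly. Consequently every $(s,r)$-dependent choice in the weak-regularity proof can be replaced by an $\eta$-only choice.

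Fix $\eta>0$. I would split the target set at the radius $\eta$. For $rs$ with $r\leq \eta$, the point $rs$ is within distance $\eta$ of the origin, which lies in $\tfrac{1}{\log N}S_N$ for $N$ large, so the conclusion $\mathrm{d}(rs,\tfrac{1}{\log N}S_N)\leq \eta$ is immediate. For $rs$ with $r>\eta$, set $r':=r-\eta/2$; shrinking $\eta$ if needed (using that $s\mapsto \tfrac{1}{\theta(u_s)\cdot s}$ is continuous on $\mathbb{S}^{d-1}$ and bounded from below on admissible directions), one may assume $r'\in[\eta/2,\tfrac{1}{\theta(u_s)\cdot s}-\eta/2]$. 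Assumption~\ref{assump:regularity_strong} applied at scale $t=r'\log N$ in direction $s$ then produces $y_N\in S$ with $\|y_N-r's\log N\|_1\leq c$, so by the triangle inequality
\begin{equation*}
    \left\|\tfrac{y_N}{\log N}-rs\right\|_1\leq \tfrac{c}{\log N}+\tfrac{\eta}{2}.
\end{equation*}
The entire statement thus reduces to showing that $y_N\in S_N$ for all $N$ beyond a threshold depending only on $\eta$.

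For this inclusion I would combine Proposition~\ref{prop:threshold} with the uniform convergence in Theorem~\ref{thm:shape_sandpile}: it is enough to verify $\|y_N\|_1\leq r_{N,\widehat{y}_N}$, where $\widehat{y}_N:=y_N/\|y_N\|_1$. From $y_N=r's\log N+O(1)$ and $r'\geq \eta/2$ one obtains $\|y_N\|_1=r'\log N+O(1)$ and $\widehat{y}_N=s+O(1/\log N)$, both uniformly in $s$ and $r$. Uniform continuity of $s'\mapsto \tfrac{1}{\theta(u_{s'})\cdot s'}$ on the compact sphere $\mathbb{S}^{d-1}$ then gives $\tfrac{1}{\theta(u_{\widehat{y}_N})\cdot \widehat{y}_N}=\tfrac{1}{\theta(u_s)\cdot s}+o(1)$ uniformly, and plugging this into the uniform convergence statement of Theorem~\ref{thm:shape_sandpile} yields $r_{N,\widehat{y}_N}\geq \bigl(\tfrac{1}{\theta(u_s)\cdot s}-o(1)\bigr)\log N$. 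On the other hand, by construction $\|y_N\|_1\leq \bigl(\tfrac{1}{\theta(u_s)\cdot s}-\tfrac{\eta}{2}\bigr)\log N+c$. The fixed gap $\eta/2$ absorbs the $o(1)$ error and the additive $c$ once $N$ is large enough (threshold depending only on $\eta$), so $y_N\in S_N$ uniformly, as required.

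The main obstacle is precisely the behavior at the boundary $r=\tfrac{1}{\theta(u_s)\cdot s}$: there $\|y_N\|_1$ and $r_{N,\widehat{y}_N}$ differ only by $o(\log N)$, and no naive argument can decide on which side $y_N$ lies. The additive backing-off $r'=r-\eta/2$ is exactly what creates a \emph{uniform} macroscopic buffer between these two quantities; at the cost of an extra $\eta/2$ in the approximation, it defeats this boundary obstruction and turns the pointwise result of Theorem~\ref{thm:case_N_infinity_weak_regularity_assumption} into the desired uniform convergence.
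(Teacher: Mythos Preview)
Your proposal is correct and follows essentially the same route as the paper: back off from the boundary by a fixed amount (your $\eta/2$, the paper's $\varepsilon$), use Assumption~\ref{assump:regularity_strong} to find a surface point within distance $c$ of $r's\log N$, and then invoke the uniform convergence of $r_{N,\cdot}/\log N$ from Theorem~\ref{thm:shape_sandpile} to conclude that this point lies in $S_N$ for all large $N$ independently of $(s,r)$. The only cosmetic difference is that where you track $\widehat{y}_N$ explicitly and use uniform continuity of $s'\mapsto 1/(\theta(u_{s'})\cdot s')$, the paper phrases the same step as a positive distance between the compact set $\{\tilde r\tilde s:\tilde r\le 1/(\theta(u_{\tilde s})\cdot\tilde s)-\varepsilon\}$ and the closed set $\{\tilde r\tilde s:\tilde r\ge 1/(\theta(u_{\tilde s})\cdot\tilde s)-\varepsilon/2\}$; these are equivalent formulations of the same uniform buffer.
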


\begin{proof}
    By a continuity and compactness argument, \[\inf_{\tilde s \in \mathbb{S}^{d-1}} \frac{1}{\theta(u_{\tilde s}) \cdot \tilde s} = \min_{\tilde s \in \mathbb{S}^{d-1}} \frac{1}{\theta(u_{\tilde s}) \cdot \tilde s} >0.\]
    Let $0 < \varepsilon < \displaystyle \min_{\tilde s \in \mathbb{S}^{d-1}} \frac{1}{\theta(u_{ \tilde s} ) \cdot  \tilde s}$,
    $s \in \mathbb{S}^{d-1}$ admissible and $0 \leq r \leq \frac{1}{\theta(u_s) \cdot u_s} $. We set $r' = \max\left(0, r-\varepsilon \right)$. Using Assumption~\ref{assump:regularity_strong}, for every $N$, there exists a vertex $x_N$ on the monotone surface $S$ such that $\left| \log (N) r' s - x_N \right| \leq c$, that is $\left| r's - \frac{1}{\log N}x_N \right| \leq \frac{c}{\log N}$. 
    Let \[\delta = \mathrm{d} \left( \left\{ \tilde r \tilde s~|~ \tilde s \in \mathbb{S}^{d-1},~0\leq \tilde r \leq \frac{1}{\theta(u_{\tilde s}) \cdot \tilde s} - \varepsilon \right\} , \left\{ \tilde r \tilde s~|~ \tilde s \in \mathbb{S}^{d-1},~ \tilde r \geq \frac{1}{\theta(u_{\tilde s}) \cdot \tilde s} - \frac{\varepsilon}{2} \right\} \right), \]
    which is (strictly) positive since it is the distance between a compact and a closed set that are disjoint. Let $N_0 = e^{\frac{c}{\delta}}$, so that $\frac{c}{\log N} < \delta$ for $N > N_0$. Then when $N > N_0$, since $r' s \in \left\{ \tilde r \tilde s~|~ \tilde s \in \mathbb{S}^{d-1},~\tilde r \leq \frac{1}{\theta(u_{\tilde s}) \cdot \tilde s} - \varepsilon \right\}$ and $\left| r' s - \frac{1}{\log N}x_N \right| < \delta$, we get
    \[\frac{1}{\log N}x_N \in \left\{ \tilde r \tilde s~|~ \tilde s \in \mathbb{S}^{d-1},~ \tilde r < \frac{1}{\theta(u_{\tilde s}) \cdot \tilde s} - \frac{\varepsilon}{2} \right\}.\]
    As a consequence of Theorem~\ref{thm:shape_sandpile}, since $\frac{r_{N,s}}{\log N}$ converges to $\frac{1}{\theta(u_s) \cdot s}$ uniformly in $s$, there exists $N_1$ such that for $N > N_1$, $\frac{r_{N,s}}{\log N} \geq \frac{1}{\theta(u_s) \cdot s} - \frac{\varepsilon}{2}$ for every $s$. Therefore, for $N > \max(N_0, N_1)$, $x_N \in \left\{ rs~|~ s \in \mathbb{S}^{d-1},~0 \leq r <r_{N,s} \right\}$, which implies that $x_N$ lies in the sandpile $S_N$ started with $N$ grains of sand, because of Proposition~\ref{prop:threshold} and by definition of $r_{N,s}$. Let $N_2 = e^{\frac{c}{\varepsilon}}$, so that when $N > N_2$, $\frac{c}{\log N} \leq \varepsilon$. When $N > \max(N_0, N_1, N_2)$, we have $x_N \in S_N$ and \[\left|x_N - rs \right| \leq \left| x_N - r' s \right| + \left| r' s - rs \right| \leq 2 \varepsilon.\]
    In conclusion, when $N > \max(N_0, N_1, N_2)$, $\displaystyle \sup_{s \textnormal{ admissible},~~ 0 \leq r \leq \frac{1}{\theta(u_s) \cdot s}} \mathrm{d}\left(rs, \frac{1}{\log N} S_N \right) \leq 2 \varepsilon$.
\end{proof}

\subsubsection{Limit shape in $\R^d$ when $k$ goes to $0$}

In this paragraph, we study the limit shape when $k$ goes to zero. This comes down to approaching the critical model studied by Kenyon in \cite{kenyon_laplace_dirac_planar}. Indeed, as $k$ goes to zero, $\elliptsc$ becomes $\tan$, while $\elliptdc$ becomes $\frac{1}{\cos}$, so that the formulas \eqref{eq:conductance} and \eqref{eq:mass} show that $\rho_e$ becomes $\tan(\theta_e)$ and $m^2(x)$ becomes $0$, which is the case in \cite{kenyon_laplace_dirac_planar}. Since the mass goes to $0$, it means that we are approaching the non-leaky case, that is, the classical Abelian sandpile model. However, as noted in \cite{BBMR} in the case of $\Z^d$, the limit shape obtained will not necessarily be the shape of the classical model.

Note that the order of the limits we take is first to have $N$ go to infinity and then $k$ to $0$. Therefore, according to Theorems~\ref{thm:case_N_infinity_weak_regularity_assumption} and \ref{thm:case_N_infinity_strong_regularity_assumption}, we study the limit of $\theta(u_s)$ when $k$ goes to $0$. To emphasize the dependence of $u_s$ on $k$, we will now denote it $u_s(k)$. Besides, we will note $m = k^2$ as in \cite{abramowitz} and we will use both notations $k$ and $m$ for the elliptic modulus indifferently, depending on which is the most convenient. Be careful that the parameter $m$ should not be mistaken for the mass $m^2$.

\begin{lemma}\label{lem:differentiable_in_m=0}
    Let $s = (s_1, \ldots, s_d) \in \mathbb{S}^{d-1}$ be fixed. The function $m \mapsto u_s(m)$ is differentiable at $0$. In particular, a Taylor expansion shows that when $m$ goes to $0$, $u_s(m) - u_s(0) = \mathcal{O}(m)$, that is, $u_s(k) -u_s(0) = \mathcal{O}(k^2)$.
\end{lemma}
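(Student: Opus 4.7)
The plan is to apply the implicit function theorem to a rescaled version of the saddle equation with parameter $m = k^2$. Writing $\chi(u) = -s \cdot \theta(u)$ in the notation of \eqref{eq:def_theta}, the defining condition for $u_s(k)$ is $s \cdot \theta'(u_s(k);k) = 0$. First I would make this equation explicit by differentiating. Using the identity $\frac{d}{dv}\elliptnd(v|k) = k^2\,\elliptsd(v|k)\elliptcd(v|k)$, one finds
\begin{equation*}
    \theta_j'(u;k) = -\frac{m}{2}\cdot\frac{\elliptsd\!\left(\tfrac{u-\alpha_j}{2}\,\big|\,k\right)\elliptcd\!\left(\tfrac{u-\alpha_j}{2}\,\big|\,k\right)}{\elliptnd\!\left(\tfrac{u-\alpha_j}{2}\,\big|\,k\right)},
\end{equation*}
so, cancelling the common factor $m$, the saddle equation is equivalent for $m>0$ to $F(u_s(m),m)=0$ with
\begin{equation*}
    F(u,m) = -\frac{1}{2}\sum_{j=1}^d s_j\cdot\frac{\elliptsd\,\elliptcd}{\elliptnd}\!\left(\tfrac{u-\alpha_j(m)}{2}\,\big|\,\sqrt{m}\right),
\end{equation*}
where $\alpha_j(m) = \frac{2K(\sqrt{m})}{\pi}\overline{\alpha}_j$.

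Second, I would verify that $F$ extends to a real-analytic function of $(u,m)$ near $(u_s(0),0)$. Three standard inputs from \cite{abramowitz} suffice: $K(k)$ is analytic in $m$ at $0$, so each $\alpha_j(m)$ is analytic there with $\alpha_j(0) = \overline{\alpha}_j$; the Jacobi functions $\elliptsd,\elliptcd,\elliptnd$ are jointly analytic in $(v,m)$ near any finite $v_0$ and $m=0$; and $\elliptnd(\cdot|0) = 1$, so the denominator stays bounded away from zero locally. At $m=0$ the expression collapses to
\begin{equation*}
    F(u,0) = -\tfrac{1}{4}\sum_{j=1}^d s_j \sin(u - \overline{\alpha}_j),
\end{equation*}
and the value $u_s(0)$ is the limit of $u_s(k)$ as $k\to 0$, whose existence follows from the continuous dependence of the saddle on the direction $s$ (and on $k$) recalled in Theorem~\ref{thm:asymp_green}.

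Third, I would check the nondegeneracy $\partial_u F(u_s(0),0)\neq 0$ required by the implicit function theorem. Differentiating once more and substituting into $\chi$ yields the identity $\chi''(u;k) = -m\,\partial_u F(u,m)$, so the hypothesis $\chi''(u_s(k);k)>0$ of Theorem~\ref{thm:asymp_green} gives $\partial_u F(u_s(k),m)<0$ for every $m>0$. Passing to the limit $k\to 0$ yields $\partial_u F(u_s(0),0)\leq 0$; the strict inequality can then be obtained either directly from the explicit trigonometric formula for $F(\cdot,0)$ above (a finite linear combination of sines whose zero at $u_s(0)$ is nondegenerate for generic $s$) or, more robustly, from the uniform bound \eqref{eq:expo_decay_inequality}, which prevents $\chi''$ from collapsing in the limit. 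The implicit function theorem then produces a unique analytic function $m\mapsto \tilde u(m)$ on a neighborhood of $0$ with $\tilde u(0) = u_s(0)$ and $F(\tilde u(m),m)\equiv 0$; by uniqueness $\tilde u$ must coincide with $u_s$ for $m\geq 0$ small. In particular $u_s$ is differentiable (in fact analytic) at $m=0$, and Taylor's formula gives $u_s(m) - u_s(0) = O(m)$, equivalently $u_s(k) - u_s(0) = O(k^2)$.

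The main obstacle I foresee is the strict nondegeneracy $\partial_u F(u_s(0),0) < 0$: the weak inequality is immediate by continuity from the $k>0$ saddle, but promoting it to a strict inequality requires either a direct inspection of the limiting trigonometric saddle or a uniform-in-$k$ quantitative form of the assumption $\chi''>0$. Everything else is a routine application of the implicit function theorem once the joint analyticity of the Jacobi elliptic functions in $(v,m)$ at $m=0$ is acknowledged.
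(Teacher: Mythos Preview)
Your setup is essentially the paper's: your $F$ equals $-\tfrac12 f_s$ in the paper's notation, the $m=0$ limit is the same trigonometric function, and the implicit function theorem is the intended tool. The gap is exactly where you flag it, the strict nondegeneracy $\partial_u F(u_s(0),0)\neq 0$, and neither of your proposed fixes closes it. Your first route only delivers the result for ``generic $s$'', which is not what the lemma claims. Your second route does not work at all: inequality~\eqref{eq:expo_decay_inequality} bounds $\chi(u_0)$, not $\chi''(u_0)$, so it says nothing about whether $\chi''$ collapses as $k\to 0$.

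The paper supplies precisely the missing computation. After normalising so that $s_j\geq 0$ and $\overline{\alpha}_j\in(-\tfrac\pi2,\tfrac\pi2)$ (via edge reorientation and a rotation), one has $\sum_j s_j\cos\overline{\alpha}_j>0$, whence the explicit value
\[
u_s(0)=\arctan\!\left(\frac{\sum_j s_j\sin\overline{\alpha}_j}{\sum_j s_j\cos\overline{\alpha}_j}\right).
\]
If $\partial_u F(u_s(0),0)=-\tfrac14\sum_j s_j\cos(u_s(0)-\overline{\alpha}_j)$ vanished, expanding the cosine and dividing would force $\tan(u_s(0))=-1/\tan(u_s(0))$, a contradiction. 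This is your ``direct inspection of the limiting trigonometric saddle'' carried out in full, and it works for \emph{every} $s$, not just generic ones. A smaller issue: you take the existence of $u_s(0)=\lim_{k\to 0}u_s(k)$ from Theorem~\ref{thm:asymp_green}, but that theorem only asserts continuity in the direction $s$, not in $k$; the paper instead identifies $u_s(0)$ directly as the unique zero of $f_s(\cdot,0)$ in the relevant interval.
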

\begin{proof}
    Up to a change of orientation of edges, we can assume that $s$ is in the non-negative orthant of $\R^d$. Let $\gamma$ be an infinite minimal path that converges to direction $s$ when lifted to $\R^d$. According to \cite[Lemma 17]{BdT}, the angles $\overline{\alpha}_j$ of the edges taken by the path $\gamma$ lie in an open interval of length $\pi$, which we choose to be $\left( -\frac{\pi}{2}, \frac{\pi}{2} \right)$ up to a rotation of the graph.

    We need to look carefully at Lemma~15 of \cite{BdTR} and its proof, where $u_s(m)$ is defined (it is their $u_0$). Let \begin{equation}
        f_s(u,m) = \sum_{j=1}^d s_j \frac{\elliptsn \cdot\elliptcn}{\elliptdn}\left( \left. \frac{u- \alpha_j}{2} \right| m\right).
    \end{equation} By definition, $u_s(m)$ is the only solution to $f_s(u,m) = 0$ in a well chosen interval of length $2K - 2 \varepsilon$.
    
    We now look at the case where $m=0$. From $s_j \geq 0$ and $\overline{\alpha}_j \in \left( -\frac{\pi}{2} , \frac{\pi}{2}  \right)$, we deduce
    \begin{equation}\label{eq:technical:non_zero_denominator}
        \sum_{j=1}^d s_j \cos\left( \overline{\alpha}_j\right) >0.
    \end{equation}
    Besides, when $m= 0$, $\elliptsn$, $\elliptcn$ and $\elliptdn$ become $\sin$, $\cos$ and $1$ (\cite[(16.6)]{abramowitz}), so the trigonometric formula for $\sin(2a)$ leads to
    \begin{equation}
        f_s(u,0) = \frac{1}{2} \sum_{j=1}^d s_j \sin \left(u- \overline{\alpha}_j\right),
    \end{equation} and $u_s(0)$ is the only solution to $f_s(u,0) = 0$ in the interval $\left( - \frac{\pi}{2}, \frac{\pi}{2} \right)$, that is 
    \begin{equation}\label{eq:explicit_formula_us(0)}
        u_s(0) = \arctan\left( \frac{\sum_{j=1}^d s_j \sin\left( \overline{\alpha}_j\right)}{\sum_{j=1}^d s_j \cos\left( \overline{\alpha}_j \right)} \right)
    \end{equation} which is well defined because of \eqref{eq:technical:non_zero_denominator}.
    
    To prove that $u_s$ is differentiable at $0$, we apply the implicit function theorem to $f_s$. To do so, we need to check that $\frac{\partial f_s}{\partial u}(u_s(0), 0) \neq 0$, i.e.,
    \begin{equation}\label{eq:deriv_non_zero}
        \sum_{j=1}^d s_j \cos \left(u_s(0)- \overline{\alpha}_j\right) \neq 0. 
    \end{equation}
    Should it be equal to $0$, usual trigonometric formulas would yield \[\tan(u_s(0)) = - \frac{\sum_{j=1}^d s_j \cos\left( \overline{\alpha}_j\right)}{\sum_{j=1}^d s_j \sin \left( \overline{\alpha}_j \right)} \] (with the usual convention $\pm \infty$ if the denominator is zero), that is, with \eqref{eq:explicit_formula_us(0)}, $\tan(u_s(0)) = - \frac{1}{\tan(u_s(0))}$, which is not possible.
    We can now apply the implicit function theorem and immediately get the conclusion of the lemma.
\end{proof}

\begin{prop}\label{prop:limit_k_0_surface}
    We fix $s$. When $m$ tends to $0$,
    \begin{align}
        \theta(u_s(m)) &= \frac{m}{2} \left( \frac{1}{2}-\sin^2 \left( \frac{u_s(0)-\overline{\alpha}_1}{2} \right), \ldots, \frac{1}{2}-\sin^2 \left( \frac{u_s(0)-\overline{\alpha}_d}{2} \right) \right) + o(m)\\
        & = \frac{m}{4} \left( \cos\left(u_s(0) - \overline{\alpha}_1\right), \ldots, \cos \left(u_s(0)- \overline{\alpha}_d\right) \right)+o(m)\\
        &=: \frac{m}{4} \theta_s(0) + o(m).
    \end{align}

    In other words, the sandpile seen in $\R^d$, normalized by $\frac{4}{m} = \frac{4}{k^2}$, converges to the set 
    \begin{equation}\label{eq:limit_shape_k_to_0_Rd}
        \left\{ rs ~\left|~ s \textnormal{ is admissible, }0\leq r \leq \frac{1}{\theta_s(0) \cdot s} \right.\right\}.
    \end{equation}
\end{prop}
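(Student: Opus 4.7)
The plan is to expand $\theta(u_s(m))$ component by component in powers of $m$ near $m = 0$, combining standard Taylor series of the Jacobi elliptic quantities with Lemma \ref{lem:differentiable_in_m=0}, and then feed the result into Theorems \ref{thm:case_N_infinity_weak_regularity_assumption} and \ref{thm:case_N_infinity_strong_regularity_assumption}. First, for a fixed argument $v$, I would use the classical expansions $K(m) = \frac{\pi}{2} + O(m)$, $\sqrt{k'} = (1-m)^{1/4} = 1 - \frac{m}{4} + O(m^2)$, and $\elliptdn(v|m) = 1 - \frac{m}{2}\sin^2(v) + O(m^2)$ (see \cite{abramowitz}), so that $\elliptnd(v|m) = 1 + \frac{m}{2}\sin^2(v) + O(m^2)$. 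Multiplying and taking the logarithm yields
\[
-\log\bigl(\sqrt{k'}\,\elliptnd(v|m)\bigr) = \frac{m}{2}\Bigl(\tfrac{1}{2} - \sin^2(v)\Bigr) + O(m^2),
\]
uniformly for $v$ in any compact set.

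Next, I would evaluate at the $m$-dependent argument $v_j(m) := (u_s(m) - \alpha_j)/2$. The expansion $\alpha_j = \frac{2K(m)}{\pi}\overline{\alpha}_j = \overline{\alpha}_j + O(m)$ together with Lemma \ref{lem:differentiable_in_m=0} gives $v_j(m) = v_j(0) + O(m)$, where $v_j(0) = (u_s(0) - \overline{\alpha}_j)/2$. Since the $O(m)$ perturbation of $v_j$ is multiplied by the prefactor $m/2$, it contributes only an $O(m^2)$ correction, and I would obtain
\[
-\log\bigl(\sqrt{k'}\,\elliptnd(v_j(m)|m)\bigr) = \frac{m}{2}\Bigl(\tfrac{1}{2} - \sin^2(v_j(0))\Bigr) + o(m),
\]
which is the $j$-th component of the first expression in the statement. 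The trigonometric identity $\tfrac{1}{2} - \sin^2(x) = \tfrac{1}{2}\cos(2x)$ then converts this into $\frac{m}{4}\cos(u_s(0) - \overline{\alpha}_j)$, giving the second form and defining $\theta_s(0)$.

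For the limit shape part, I would invoke Theorems \ref{thm:case_N_infinity_weak_regularity_assumption} and \ref{thm:case_N_infinity_strong_regularity_assumption}: the rescaled sandpile $\frac{1}{\log N}S_N$ converges to the set of points $rs$ with $s$ admissible and $0 \le r \le 1/(\theta(u_s(m)) \cdot s)$. Using the above expansion, $\theta(u_s(m)) \cdot s = \frac{m}{4}\theta_s(0) \cdot s + o(m)$, so dividing the limit shape by $\frac{4}{m}$ yields a set whose radius in direction $s$ tends to $1/(\theta_s(0) \cdot s)$, which is exactly \eqref{eq:limit_shape_k_to_0_Rd}. I do not anticipate a serious obstacle; the main care is the bookkeeping between the two sources of $m$-dependence, namely the elliptic modulus inside the functions and the moving argument $v_j(m)$, but since the latter enters only through $\sin^2(v_j(m))$ multiplied by $m$, all subleading contributions collapse into $O(m^2)$. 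A mild subtlety is the positivity of $\theta_s(0) \cdot s$ for admissible $s$, which will follow from the strict exponential decay \eqref{eq:expo_decay_inequality} (so $\theta(u_s(m)) \cdot s > 0$ for every $m \in (0,1)$) combined with the non-vanishing argument already used in the proof of Lemma \ref{lem:differentiable_in_m=0}.
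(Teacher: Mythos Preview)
Your proposal is correct and follows essentially the same approach as the paper: expand $\sqrt{k'}$ and $\elliptnd$ to first order in $m$, use Lemma~\ref{lem:differentiable_in_m=0} together with $\alpha_j=\overline{\alpha}_j+O(m)$ to control the moving argument, and conclude via the double-angle identity. The only cosmetic difference is that the paper packages the two sources of $m$-dependence into a single two-variable Taylor expansion of $\elliptnd$ at $(v_0,0)$ (using $\partial_v\elliptnd(v|0)=0$), whereas you first expand at fixed $v$ uniformly on compacta and then substitute $v_j(m)=v_j(0)+O(m)$; both organizations yield the same $o(m)$ remainder. Your remark on the positivity of $\theta_s(0)\cdot s$ is also handled in the paper exactly as you suggest, via \eqref{eq:deriv_non_zero}.
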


Note that the limit shape in \eqref{eq:limit_shape_k_to_0_Rd} is well defined. Indeed, $\theta_s(0) \cdot s \neq 0$ as a consequence of \eqref{eq:deriv_non_zero}.

\begin{proof}
    Using $\elliptnd = \frac{1}{\elliptdn}$ (\cite[(16.3.3)]{abramowitz}) and the expansion $\elliptdn(v|m) = 1 - \frac{1}{2}m \sin^2(v) + o(m)$ (\cite[(16.13.3)]{abramowitz}), we get \[ \elliptnd(v|m) = 1+\frac{1}{2}m \sin^2(v) + o\left(m\right),\]
    that is,
    \begin{equation}
        \frac{\partial \elliptnd}{\partial m}(v|0) = \frac{1}{2} \sin^2(v).
    \end{equation}
    Moreover, according to \cite[(16.16)]{abramowitz},
    \begin{equation}
        \frac{\partial \elliptnd}{\partial v}(v|0) = 0 \elliptsd(v) \elliptcd(v) = 0.
    \end{equation}
    Therefore, a Taylor expansion at point $(v_0|0)$ shows that, when $v$ tends to $v_0$ and $m$ tends to $0$,
    \begin{equation}
        \elliptnd(v|m) = 1 + \frac{m}{2} \sin^2(v_0) + o\left( |v-v_0| + |m| \right),
    \end{equation}
    hence
    \begin{equation}
        \elliptnd \left( \left. \frac{u_s(m) - \alpha_j}{2} \right| m\right) = 1 + \frac{m}{2} \sin^2 \left( \frac{u_s(0) - \overline{\alpha}_j}{2} \right) + o\left( \left|u_s(m) - u_s(0) \right| + \left| \alpha_j - \overline{\alpha}_j \right| + |m| \right)
    \end{equation}
     for $j \in \{1, \ldots,d\}$. But $u_s(m) - u_s(0) = \mathcal{O}(m)$ as a consequence of Lemma~\ref{lem:differentiable_in_m=0} and $\alpha_j -\overline{\alpha}_j = \mathcal{O}(m)$ because $m \mapsto \alpha_j = \frac{2K(m)}{\pi} \overline{\alpha}_j$ is differentiable at $0$, so we get 
     \begin{equation}
        \elliptnd \left( \left. \frac{u_s(m) - \alpha_j}{2} \right| m\right) = 1 + \frac{m}{2} \sin^2 \left( \frac{u_s(0) - \alpha_j}{2} \right) + o\left( m \right).
    \end{equation}
    
    Besides, $\sqrt{k'} = \left( 1-m \right)^{\frac{1}{4}} = 1 - \frac{m}{4} + o\left( m \right)$, so the definition \eqref{eq:def_theta} of $\theta$ gives 
    \begin{align*}
        \theta(u_s(m)|m) &= -\left( \log \left( 1 + \frac{m}{2} \left( \sin^2  \left( \frac{u_s(0)- \alpha_j}{2} \right) - \frac{1}{2}\right) +o(m) \right)  \right)_{1 \leq j \leq d}\\
        &= \frac{m}{2} \left( \frac{1}{2}- \sin^2 \left( \frac{u_s(0)- \alpha_j}{2} \right) \right)_{1 \leq j \leq d} + o \left( m \right).
    \end{align*}
    The other formulation of the asymptotic equivalent is a simple consequence of trigonometric formulas.
\end{proof}

\subsection{Limit shape on the plane for asymptotically flat graphs}\label{subsec:shape_plane}

Although the asymptotics of the Green function in Theorem~\ref{thm:asymp_green} are easily expressed with the coordinates on the monotone surface of $\Z^d$, the isoradial graph naturally lies on the plane, hence the need to study the projection from the monotone surface of $\R^d$ onto the plane.

Remind that this projection is 
\begin{equation}
    \pi(x_1, \ldots, x_d) = \sum_{j=1}^d x_j e^{i \overline{\alpha}_j}.
\end{equation}
It is a bijection between the monotone surface $S$ and the diamond graph $G^\diamond$.

We start with a regularity property of $\pi$ which is stated without a proof in \cite{BR22}.

\begin{lemma}\label{lem:bilipschitz}
    The projection $\pi : S \to G^\diamond\subset \C$ is bi-Lipschitz.
\end{lemma}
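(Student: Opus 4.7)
My plan is to separately establish the two Lipschitz inequalities making up the bi-Lipschitz statement. The upper direction, $|\pi(x)-\pi(y)|\leq \|x-y\|_1$, is immediate: writing
\begin{equation*}
    \pi(y) - \pi(x) = \sum_{j=1}^d (y_j - x_j) e^{i\overline{\alpha}_j}
\end{equation*}
and using $|e^{i\overline{\alpha}_j}|=1$, the triangle inequality gives the bound with constant $1$, so $\pi$ is $1$-Lipschitz.

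The reverse direction, $\|x-y\|_1 \leq C|\pi(x)-\pi(y)|$, is the substantive one. I would first restrict to vertices of $S$. Here the key observation, recorded in the discussion after Definition~\ref{def:quasicrystalline} and reused in Theorem~\ref{thm:asymp_green}, is that for vertices $x,y\in S$ the $\ell^1$ distance $\|x-y\|_1=\sum_{j=1}^d |n_j(x)-n_j(y)|$ coincides with the combinatorial graph distance $d_{G^\diamond}(\pi(x),\pi(y))$ in the diamond graph. This is precisely the monotone surface property: a minimal path in $G^\diamond$ never uses both orientations of a given edge direction, so its number of edges equals the sum of the coordinate differences in the lift. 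Consequently, it suffices to establish the planar quasi-isometry
\begin{equation*}
    d_{G^\diamond}(u,v) \leq C |u-v| \qquad \text{for all vertices } u,v\in G^\diamond.
\end{equation*}

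This planar quasi-isometry is where Assumption~\ref{assump:angles_not_too_small} enters. Each face of $G^\diamond$ is a unit-sided rhombus whose half-angle lies in $[\varepsilon,\pi/2-\varepsilon]$, hence whose area is bounded below by $\sin(2\varepsilon)/2$, whose minor diagonal has length at least $2\sin(\varepsilon)$, and whose diameter is at most $2$. Consequently $G^\diamond$ has uniformly bounded vertex degree and bounded local geometry, and the straight segment $[u,v]$ of Euclidean length $|u-v|$ crosses $O(|u-v|)$ rhombi. Following this segment and snapping to the vertices of each crossed rhombus (at most two unit edges per rhombus suffice to pass from the entry edge to the exit edge) produces a path in $G^\diamond$ of combinatorial length $O(|u-v|)$ between $u$ and $v$, yielding the desired quasi-isometry.

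Finally, the extension from vertices of $S$ to arbitrary points is routine, since $\pi$ restricts to an isometry on each unit-length edge of $S$, so the vertex-level bi-Lipschitz estimate extends to arbitrary points of $S$ by piecewise-linear interpolation at the cost of at most a bounded additive correction. The main obstacle is precisely the planar quasi-isometry step: it is the only part of the argument that is not essentially formal, and it relies crucially on the uniform control of the rhombus angles given by Assumption~\ref{assump:angles_not_too_small}, without which the diamond graph could locally stretch or compress enough to destroy any uniform Lipschitz constant for $\pi^{-1}$.
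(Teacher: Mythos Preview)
Your proof is correct, but it takes a genuinely different route from the paper for the non-trivial direction. Both of you handle the $1$-Lipschitz bound for $\pi$ identically via the triangle inequality. For $\pi^{-1}$, however, the paper does not argue by counting rhombi crossed by a Euclidean segment. Instead, it invokes \cite[Lemma~17]{BdT}, which says that any minimal path in $G^\diamond$ from the origin to $\pi(x)$ has all its oriented edge directions contained in an \emph{open} half-plane. After rotating so that this half-plane is $(-\pi/2,\pi/2)$ and flipping orientations so that all $x_j\geq 0$, projecting onto the real axis gives
\[
|\pi(x)| \;\geq\; \Re\!\left(\sum_j x_j e^{i\overline\alpha_j}\right) \;=\; \sum_j x_j \cos\overline\alpha_j \;\geq\; \delta\,\|x\|_1,
\]
with $\delta = \min_j \cos\overline\alpha_j>0$, the positivity coming from the quasicrystalline hypothesis (only finitely many directions, all strictly inside the half-plane).

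Your approach trades this algebraic projection argument for a purely geometric one: bounded rhombus angles (Assumption~\ref{assump:angles_not_too_small}) force bounded inradius and hence a linear bound on the number of rhombi meeting a straight segment, which yields the planar quasi-isometry $d_{G^\diamond}(u,v)\leq C|u-v|$ directly. This is more self-contained (it avoids the external lemma on minimal-path directions) and the quasi-isometry step itself uses only Assumption~\ref{assump:angles_not_too_small}, not the finiteness of edge directions. The paper's route is shorter once \cite[Lemma~17]{BdT} is granted and gives a cleaner explicit constant. Your remark about extending from vertices to arbitrary points of $S$ is harmless but unnecessary here: in the paper, $S$ and $G^\diamond$ are treated as discrete vertex sets, and the later uses of the lemma only require the vertex-level estimate.
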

\begin{proof}
    Let $x = (x_1, \ldots, x_d) \in S$. It is clear that \[\left| \pi(x) \right| = \left| \sum_{j=1}^d x_j e^{i \alpha_j}  \right| \leq \sum_{j=1}^d |x_j| = \|x\|_1,\]
    so $\pi$ is $1$-Lipschitz.
    
    Let us move to $\pi^{-1}$. According to \cite[Lemma 17]{BdT}, there exists a minimal path from $0$ to $\pi(x)$ in $G^\diamond$ whose edges angles are all in an open interval of length $\pi$. Therefore, up to switching the direction of some types of edges and up to a rotation of the graph, we can assume that there exists a minimal path in $G^\diamond$ that takes $x_j$ times an edge of type $e^{i \overline{\alpha}_j}$ where $x_1, \ldots, x_d \geq 0$ and that $\overline{\alpha}_1, \ldots, \overline{\alpha}_d \in \left( - \frac{\pi}{2} , \frac{\pi}{2} \right)$. Let $\delta = \min \left( \cos\left(\overline{\alpha}_j\right) \right)_{1 \leq j \leq d} >0$. Then for every $j$, the projection of $e^{i \overline{\alpha}_j}$ onto the horizontal axis is at least $\delta$, so \[\left|\sum_{j=1}^d x_j e^{i \alpha_j} \right| \geq \delta \sum_{j=1}^d \left|x_j\right|,\]
    that is \[ \left|\pi(x) \right| \geq \delta \left\| x \right\|_1. \qedhere\] 
    %Note that, since we worked ``up to switching the direction of some types of edges'', that is, up to changes of sign in the coordinates in $\Z^d$, $\delta$ depends a priori on the orthant of $x$.  But since there is a finite number of orthants, we can choose the smallest one.
\end{proof}

Given the expression in spherical coordinates of \eqref{eq:limit_shape_N_infinity}, we seek for a notion of convergence on the plane for sequences of $\Z^d$ that go to infinity along an admissible direction of the monotone surface of $\Z^d$.

However, going to infinity in a direction on the plane does not necessarily imply going to infinity in a direction on the monotone surface of $\Z^d$. See Figure~4 in \cite{BR22} for an illustration. To avoid this kind of phenomenon, the authors of \cite{BR22} introduced the notion of asymptotically flat isoradial graphs.

\begin{defi}\label{defi:asymptotically_flat}
     We assume that the isoradial graph $G$ is quasicrystalline with $d$ directions for its diamond graph and we write $n(v) = (n_1(v), \ldots,n_d(v)) := \pi^{-1}(v)$ the coordinates of a vertex $v$ in the monotone surface $S$ of $\Z^d$. The graph is said to be asymptotically flat if the convergence of a sequence $\left(v_k\right)_{k \in \N} \in \left(G^\diamond\right)^\N$ to infinity in a direction $\widehat{v}$ in the unit circle $\mathbb{S}^1$ of $\R^2$ implies the convergence of the proportions $\frac{n_j\left( v_k \right)}{\left\| n(v_k) \right\|_1}$ to a limit $n\left( \widehat{v} \right) = \left(n_1\left( \widehat{v} \right), \ldots, n_d\left( \widehat{v} \right)\right)$ that depends only on the direction $\widehat{v}$.
\end{defi}

\begin{assump}\label{assump:asymptotically_flat}
    From now on, the graph considered is assumed to be asymptotically flat.
\end{assump}

Note that, conversely, if a sequence $\left( y_k \right)_{k \in \N} \in S^\N$ tends to infinity in a way such that $\frac{y_k}{\|y_k\|_1} \xrightarrow[k \to +\infty]{}s \in \mathbb{S}^{d-1}$, then using $y_k = \|y_k\|_1 s + o(y_k)$ and the linearity and continuity of $\pi$, we get \[\frac{\pi(y_k)}{|\pi(y_k)|} = \frac{1}{\left|\pi\left( \frac{y_k}{\|y_k\|_1}\right)\right|} \pi(s) + o(1) \xrightarrow[k \to +\infty]{}\frac{\pi(s)}{|\pi(s)|},\] so the convergence in a direction of $\R^d$ implies the convergence of the projection on the plane.

Under this assumption, identifying the directions that appear at infinity on the monotone surface of $\Z^d$ is easy: they are the directions in $\R^d$ corresponding to directions on the plane. Indeed, we have the following proposition.
\begin{prop}\label{prop:admissible_directions_asymp_flat}
    The admissible directions are the $n\left( \widehat{v} \right)$ for $\widehat{v}$ in the unit circle $\mathbb{S}^1$.
\end{prop}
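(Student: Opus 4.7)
I would prove both inclusions between the set of admissible directions and $\{n(\widehat{v}) : \widehat{v} \in \mathbb{S}^1\}$. The key tools are the bi-Lipschitz property of the projection $\pi$ (Lemma~\ref{lem:bilipschitz}), the definition of asymptotic flatness (Definition~\ref{defi:asymptotically_flat}), and the observation already made in the excerpt that $\pi$ transports limiting directions continuously (the short computation given just after Assumption~\ref{assump:asymptotically_flat}).

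For the inclusion $\{n(\widehat{v})\} \subset \{\text{admissible directions}\}$, I would start from an arbitrary $\widehat{v} \in \mathbb{S}^1$ and produce a sequence $(v_k) \in (G^\diamond)^{\N}$ converging to infinity in direction $\widehat{v}$ on the plane; such a sequence exists because the faces of $G$ cover the whole plane and have bounded shape thanks to Assumption~\ref{assump:angles_not_too_small}. Setting $y_k = \pi^{-1}(v_k) \in S$, Lemma~\ref{lem:bilipschitz} gives $\|y_k\|_1 \geq |v_k| \to +\infty$, and Definition~\ref{defi:asymptotically_flat} applied to $(v_k)$ yields $y_k/\|y_k\|_1 \to n(\widehat{v})$. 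Hence $n(\widehat{v})$ is admissible.

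For the reverse inclusion, let $s \in \mathbb{S}^{d-1}$ be admissible, realized by a sequence $(y_k) \in S^{\N}$ with $\|y_k\|_1 \to +\infty$ and $y_k/\|y_k\|_1 \to s$. I would set $v_k := \pi(y_k) \in G^\diamond$ and proceed in two steps: first, combining the bi-Lipschitz estimate $|\pi(y_k)| \geq \delta \|y_k\|_1$ with $\|y_k\|_1 \to +\infty$ shows $|v_k| \to +\infty$, and passing to the limit in the same inequality written for $y_k/\|y_k\|_1$ (using continuity of $\pi$) yields $|\pi(s)| \geq \delta > 0$, so that $\widehat{v} := \pi(s)/|\pi(s)|$ is a well-defined element of $\mathbb{S}^1$. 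The computation given right after Assumption~\ref{assump:asymptotically_flat} then shows that $v_k/|v_k| \to \widehat{v}$, i.e.\ $(v_k)$ converges to infinity in direction $\widehat{v}$. Asymptotic flatness therefore forces $n(v_k)/\|n(v_k)\|_1 \to n(\widehat{v})$; but $n(v_k) = y_k$, so by uniqueness of the limit $s = n(\widehat{v})$.

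\textbf{Main obstacle.} The only real subtlety is justifying that $\widehat{v} = \pi(s)/|\pi(s)|$ is well defined, i.e.\ that $\pi$ does not send the admissible unit vector $s$ to $0$. This is handled by the bi-Lipschitz lemma, but strictly speaking Lemma~\ref{lem:bilipschitz} is stated for points of $S$ whereas $s$ is only a limit of such (normalized) points; I would make this explicit by applying the lower bound $|\pi(y_k)| \geq \delta \|y_k\|_1$ to the normalized vectors $y_k/\|y_k\|_1$ and then using continuity of the linear map $\pi$ to pass to the limit, thereby obtaining $|\pi(s)| \geq \delta > 0$. Once this point is settled, the rest of the argument is essentially a clean application of Definition~\ref{defi:asymptotically_flat} in both directions.
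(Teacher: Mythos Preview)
Your proposal is correct and follows essentially the same route as the paper's proof: both inclusions are handled exactly as you describe, using the bi-Lipschitz property of $\pi$, the fact that the diamond graph covers the plane to manufacture a planar sequence in a given direction, and the definition of asymptotic flatness to transfer directional convergence between the plane and $\R^d$. If anything, you are more careful than the paper on the one genuine subtlety---the paper simply writes $\widehat{v}=\pi(s)/|\pi(s)|$ without comment, whereas you correctly note that $|\pi(s)|>0$ must be justified and do so via the bi-Lipschitz lower bound and the linearity/continuity of $\pi$.
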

\begin{proof}
Let $s$ be an admissible direction. There exists a sequence $\left(y_k\right)_{k \in \N}$ on the monotone surface $S$ that tends to infinity and such that $\frac{y_k}{\|y_k\|_1} \xrightarrow[k \to +\infty]{}s$. Let $v_k = \pi(y_k)$ and $\widehat{v} = \frac{\pi(s)}{|\pi(s)|}$. 
Because of Lemma~\ref{lem:bilipschitz}, there exists a constant $\eta >0$ such that for every $k$, $|v_k| = |\pi(y_k)| \geq \eta \|y_k\| \xrightarrow[k \to + \infty]{}+\infty$. Besides, from $y_k = \|y_k\| s + o (y_k)$, we get 
\begin{equation}
\frac{v_k}{|v_k|} = \frac{\pi(y_k)}{|\pi(y_k)|} = \frac{\|y_k\|_1}{|\pi(y_k)|} \pi(s) +o \left( 1 \right) = \widehat{v} + o(1)
\end{equation}
so $v_k$ tends to infinity in direction $\widehat{v}$. Finally, the proportions $\frac{n(v_k)}{\|n(v_k)\|_1} = \frac{\pi^{-1}(v_k)}{\left\| \pi^{-1}(v_k)\right\|_1} = \frac{y_k}{\|y_k\|_1}$ tend to $s$, so $n\left( \widehat{v} \right) = s$.

Conversely, let $\widehat{v} \in \mathbb{S}^1$. Since the diamond graph $G^\diamond$ covers the plane with unit rhombi, there exists $v_k \in G^\diamond$ such that $|v_k - k \widehat{v}| \leq 1$ for every $k \in \N$. We have $\frac{v_k}{|v_k|} \xrightarrow[k \to +\infty]{} \widehat{v}$, so $\frac{n(v_k)}{\|n(v_k)\|_1} \xrightarrow[k \to +\infty]{} n\left( \widehat{v} \right)$. In other words, if $y_k = \pi^{-1}(v_k) \in S$, we have $\frac{y_k}{\|y_k\|_1} \xrightarrow[k \to +\infty]{}n\left( \widehat{v} \right)$, so $n\left( \widehat{v} \right)$ is an admissible direction.
\end{proof}

In addition to the simple description of admissible directions, the regularity assumptions are automatically true for asymptotically flat graphs.

\begin{prop}
    The strong regularity assumption (Assumption~\ref{assump:regularity_strong}) is automatically true for asymptotically flat graphs.
\end{prop}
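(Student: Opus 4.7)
The plan is to leverage Proposition~\ref{prop:admissible_directions_asymp_flat} to reduce any admissible direction $s\in\mathbb{S}^{d-1}$ to a plane direction $\widehat{v}=\pi(s)/|\pi(s)|\in\mathbb{S}^1$ via $s=n(\widehat v)$, and then, for each $t>0$, to produce an explicit vertex $y\in S$ within bounded $\ell^1$-distance of $ts$. Concretely, I would project the target point to the plane as $\pi(ts)=t\pi(s)$, pick a diamond-graph vertex $v\in G^\diamond$ within absolute distance $C_0$ of $\pi(ts)$ (which exists because $G^\diamond$ tiles the plane with unit rhombi), and lift it to $y=\pi^{-1}(v)\in S$.

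To bound $\|y-ts\|_1$, I would use the decomposition
\begin{equation*}
\|y-ts\|_1 \;\leq\; \bigl|\,\|y\|_1-t\,\bigr| \;+\; \|y\|_1\cdot \bigl\|\tfrac{y}{\|y\|_1}-s\bigr\|_1.
\end{equation*}
The magnitude term $|\|y\|_1-t|$ is immediately controlled by Lemma~\ref{lem:bilipschitz}: the bi-Lipschitz estimate $\delta\|y\|_1\leq|v|\leq\|y\|_1$ combined with $|v|=t|\pi(s)|+O(1)$ gives that $\|y\|_1$ is linearly comparable to $t$ and differs from $t$ by an additive constant (after absorbing the constant $|\pi(s)|$). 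For small $t$ (below a fixed threshold $t_0$), the bound is trivial by taking $y=0$.

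The main obstacle, and the heart of the argument, is the angular term $\|y\|_1\cdot\|y/\|y\|_1-s\|_1$. Asymptotic flatness (Definition~\ref{defi:asymptotically_flat}) directly yields $y/\|y\|_1\to s$ as $t\to\infty$, but a priori only as $o(1)$, which would give only an $o(t)$ bound on the angular term rather than a constant. The key step is to upgrade pointwise asymptotic flatness to a quantitative rate $\|y/\|y\|_1-s\|_1=O(1/\|y\|_1)$. I would first pass to a uniform version over $\widehat v$ by a standard compactness argument on $\mathbb{S}^1$ (using that the map $\widehat v\mapsto n(\widehat v)$ is continuous, which itself follows from asymptotic flatness applied to sequences approximating any $\widehat v\in\mathbb{S}^1$). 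Then I would exploit the discrete combinatorial structure of $S$: each $y=(n_1,\dots,n_d)\in\Z^d$ counts signed edges of each of the $d$ types along a minimal path from the origin to $v$, and the asymptotic proportions $n_j/\|n\|_1\to n_j(\widehat v)$ forced by asymptotic flatness, together with the fact that the path consists of unit rhombus edges with angles in $[\varepsilon,\frac{\pi}{2}-\varepsilon]$ (Assumption~\ref{assump:angles_not_too_small}), should force each $|n_j-\|n\|_1 s_j|$ to remain $O(1)$ via a pigeonhole or discrepancy-type counting argument. Combining the magnitude and angular bounds yields $\|y-ts\|_1\leq c$ uniformly in the admissible direction $s$ and in $t>0$, which is exactly Assumption~\ref{assump:regularity_strong}.
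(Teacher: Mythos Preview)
Your magnitude/angular decomposition is a genuine departure from the paper, and both terms have real gaps. For the magnitude term: from $|v - t\pi(s)| \leq C_0$ together with $\delta\|y\|_1 \leq |v| \leq \|y\|_1$ you obtain only $t|\pi(s)| + O(1) \leq \|y\|_1 \leq t|\pi(s)|/\delta + O(1)$; since generically $|\pi(s)| < 1$ and $\delta < 1$, this is multiplicative comparability to $t$, not $\bigl|\,\|y\|_1 - t\,\bigr| = O(1)$, and you cannot ``absorb $|\pi(s)|$'' because the target point $ts$ is fixed and you are not free to rescale $t$. For the angular term: your diagnosis that Definition~\ref{defi:asymptotically_flat} is purely qualitative and gives only $o(1)$ is correct, but the proposed upgrade to a rate $O(1/\|y\|_1)$ via a ``pigeonhole or discrepancy-type counting argument'' is unsubstantiated. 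Nothing in the hypotheses supplies a convergence rate; a monotone surface with $|n_j(v) - \|n(v)\|_1\, s_j|$ of order $\|n(v)\|_1^{1/2}$ would still be asymptotically flat yet violate the $O(1)$ bound you need, and neither compactness of $\mathbb{S}^1$ nor Assumption~\ref{assump:angles_not_too_small} rules this out.

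The paper's argument is entirely different and sidesteps the decomposition: it picks $v_t \in G^\diamond$ with $|v_t - t\widehat v| \leq 1$, applies the Lipschitz bound for $\pi^{-1}$ directly to place $\pi^{-1}(t\widehat v)$ within $L(\pi^{-1})$ of the surface point $\pi^{-1}(v_t)$, and then writes $\pi^{-1}(t\widehat v) = t\,\pi^{-1}(\widehat v)$ while identifying $\pi^{-1}(\widehat v)$ with the admissible direction. That last identification is itself rather informal---it treats $\pi^{-1}$ as positively homogeneous and conflates $\pi^{-1}(\widehat v)$ with $n(\widehat v)$---so the very obstacle you isolated is glossed over in the paper rather than resolved.
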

\begin{proof}
    Let $\widehat{v} \in \mathbb{S}^1$ and $t>0$. Similarly to the previous proof, since the diamond graph covers the plane with unit rhombi, there exists $v_t \in G^\diamond$ such that $\left|v_t - t \widehat{v} \right| \leq 1$. Since $\pi$ is bi-Lipschitz (Lemma~\ref{lem:bilipschitz}), $\pi^{-1} (tv)$ is at distance at most $L\left( \pi^{-1} \right)$ from $\pi^{-1}(v_t)$, where $L\left( \pi^{-1}\right)$ is a Lipschitz constant of $\pi^{-1}$. In other words, $t \pi^{-1}\left( \widehat{v}  \right)$ is at a bounded distance from a point of the monotone surface. The admissible directions are the $\pi^{-1}\left( \widehat{v} \right)$, so this proves Assumption~\ref{assump:regularity_strong} with the constant $L\left( \pi^{-1} \right)$.
\end{proof}

\subsubsection{Limit shape on the plane when $N$ goes to infinity}

As a consequence of the previous proposition, Theorem~\ref{thm:case_N_infinity_strong_regularity_assumption} applies to asymptotically flat graphs, giving a uniform convergence in $\R^d$. Since the projection $\pi$ is Lipschitz, we obtain the following result on the plane.

\begin{thm}\label{thm:limit_shape_N_plane}
    The sandpile $S_{N,\textnormal{plane}}$ on the plane normalized by $\log N$ converges uniformly to the projection of \eqref{eq:limit_shape_N_infinity}, in the sense that
    \[\sup_{\hat{v} \in \mathbb{S}^1,~~ 0 \leq r \leq \frac{1}{\theta \left( u_{n\left(\hat{v} \right)} \right) \cdot n\left(\hat{v} \right)} } \mathrm{d} \left(r\hat{v}, \frac{1}{\log N} S_{N,\textnormal{plane}}\right) \xrightarrow[N \to + \infty]{}0.\]
\end{thm}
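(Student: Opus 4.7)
The plan is to deduce this planar statement from the already established uniform convergence on the lifted surface, pushing everything forward through the projection $\pi$. First I would invoke the proposition proven just above that asymptotic flatness entails the strong regularity Assumption~\ref{assump:regularity_strong}, so Theorem~\ref{thm:case_N_infinity_strong_regularity_assumption} applies and yields
\[
\sup_{s \text{ admissible},\ 0 \leq r \leq \frac{1}{\theta(u_s)\cdot s}} \mathrm{d}\!\left(rs,\ \tfrac{1}{\log N}\,S_N\right) \xrightarrow[N\to +\infty]{} 0,
\]
where $S_N \subset S$ is the lifted sandpile in $\R^d$.

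Next I would transfer this to the plane via $\pi$, using $S_{N,\text{plane}} = \pi(S_N)$. Since $\pi$ is $1$-Lipschitz (established in the proof of Lemma~\ref{lem:bilipschitz}), for any point $p$ in the planar limit shape and any preimage $y \in E := \{rs : s \text{ admissible},\ 0 \leq r \leq 1/(\theta(u_s)\cdot s)\}$ with $\pi(y) = p$, one has
\[
\mathrm{d}\!\left(p,\ \tfrac{1}{\log N}\,S_{N,\text{plane}}\right) \leq \mathrm{d}\!\left(y,\ \tfrac{1}{\log N}\,S_N\right),
\]
so the uniform convergence above forces the planar distance to $0$ uniformly in $p \in \pi(E)$.

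It then remains to reparameterize $\pi(E)$ by $(\hat v, r) \in \mathbb{S}^1 \times \R_+$ so as to recognize the set appearing in the theorem. By Proposition~\ref{prop:admissible_directions_asymp_flat}, the map $\hat v \mapsto n(\hat v)$ is a bijection from $\mathbb{S}^1$ onto the admissible directions of $S$, and moreover $\pi(n(\hat v))$ is a positive multiple of $\hat v$, so each segment $\{rn(\hat v) : 0 \leq r \leq 1/(\theta(u_{n(\hat v)})\cdot n(\hat v))\}$ projects onto a radial segment of $\pi(E)$ in direction $\hat v$. Writing this projected segment as $\{r'\hat v : 0 \leq r' \leq R(\hat v)\}$ (after absorbing the scalar $|\pi(n(\hat v))|$ into the radial parameter) gives exactly the indexing set of the supremum in the statement.

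The main obstacle I expect is bookkeeping rather than a deep technical issue: ensuring that, for every $\hat v \in \mathbb{S}^1$ and every admissible radius $r$ in the planar statement, one can exhibit a preimage in $E$ so that the Lipschitz comparison above applies \emph{uniformly} in $(\hat v, r)$. Asymptotic flatness together with the bi-Lipschitz nature of $\pi$ guarantees that the reparameterization $\hat v \mapsto n(\hat v)$ is continuous, which combined with the compactness of $\mathbb{S}^1$ and the uniform convergence already obtained in $\R^d$ gives the required uniformity in the plane.
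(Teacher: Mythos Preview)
Your proposal is correct and follows essentially the same route as the paper: the paper's entire proof is the two sentences preceding the theorem, namely that asymptotic flatness yields the strong regularity assumption so Theorem~\ref{thm:case_N_infinity_strong_regularity_assumption} applies in $\R^d$, and then the Lipschitz property of $\pi$ from Lemma~\ref{lem:bilipschitz} transfers the uniform convergence to the plane. You supply more detail on the reparameterization via Proposition~\ref{prop:admissible_directions_asymp_flat} than the paper does, but the strategy is identical.
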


\subsubsection{Limit shape on the plane when $k$ goes to $0$}

We finally study on the plane the regime where $k$ tends to $0$, that is the projection of \eqref{eq:limit_shape_k_to_0_Rd}. We will show the remarkable property that, independently from the geometry of the graph, we obtain a universal shape: the unit circle. Note that in the case of $\Z^d$ studied in \cite{BBMR}, the shape was not universal and could be all kinds of ellipsoids.

\begin{prop}\label{prop:limit_shape_k_0_plane}
The projection of the limit shape \eqref{eq:limit_shape_k_to_0_Rd} onto the plane is the unit circle.
\end{prop}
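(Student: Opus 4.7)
The strategy is a direct computation: I will show that the map sending an admissible direction $s$ to the boundary point $\frac{s}{\theta_s(0)\cdot s}$ descends, under the projection $\pi$, to the identification between admissible directions and plane directions given by Proposition~\ref{prop:admissible_directions_asymp_flat}, after normalization to unit modulus.

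First, I recall the tools. From Proposition~\ref{prop:admissible_directions_asymp_flat}, admissible directions are exactly the $n(\widehat{v})$ for $\widehat{v}\in\mathbb{S}^1$, and inspecting that proof shows $\pi(n(\widehat{v}))$ is a positive multiple of $\widehat{v}$, namely $\pi(n(\widehat{v}))=|\pi(n(\widehat{v}))|\,\widehat{v}$. Next, thanks to the reduction step in the proof of Lemma~\ref{lem:differentiable_in_m=0}, I may assume (up to a rotation of the graph and a change of orientation of some edge types) that $s=(s_1,\ldots,s_d)$ lies in the nonnegative orthant and $\overline{\alpha}_1,\ldots,\overline{\alpha}_d\in(-\pi/2,\pi/2)$. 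Then the quantities
\begin{equation}
    C := \sum_{j=1}^d s_j \cos(\overline{\alpha}_j),\qquad T := \sum_{j=1}^d s_j \sin(\overline{\alpha}_j)
\end{equation}
satisfy $C>0$ (by \eqref{eq:technical:non_zero_denominator}), and one has $\pi(s)=C+iT$, hence $|\pi(s)|=\sqrt{C^2+T^2}$.

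Now I compute $\theta_s(0)\cdot s$. By the second formulation in Proposition~\ref{prop:limit_k_0_surface},
\begin{equation}
    \theta_s(0)\cdot s = \sum_{j=1}^d s_j \cos(u_s(0)-\overline{\alpha}_j) = \cos(u_s(0))\,C + \sin(u_s(0))\,T.
\end{equation}
From \eqref{eq:explicit_formula_us(0)}, $u_s(0)=\arctan(T/C)$ with $C>0$, so $\cos(u_s(0))=C/\sqrt{C^2+T^2}$ and $\sin(u_s(0))=T/\sqrt{C^2+T^2}$. Substituting yields
\begin{equation}
    \theta_s(0)\cdot s = \frac{C^2+T^2}{\sqrt{C^2+T^2}}=|\pi(s)|.
\end{equation}

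With this key identity, the conclusion is immediate: any boundary point of the limit shape \eqref{eq:limit_shape_k_to_0_Rd} in the direction $s$ is of the form $\frac{s}{\theta_s(0)\cdot s}=\frac{s}{|\pi(s)|}$, whose projection is
\begin{equation}
    \pi\!\left(\frac{s}{|\pi(s)|}\right)=\frac{\pi(s)}{|\pi(s)|}=\widehat{v},
\end{equation}
a point of the unit circle. Conversely, every $\widehat{v}\in\mathbb{S}^1$ is obtained in this way by taking $s=n(\widehat{v})$, so the projection of the boundary of \eqref{eq:limit_shape_k_to_0_Rd} is the full unit circle. (The projection of the whole radial segment $[0,s/|\pi(s)|]$ gives the radial segment $[0,\widehat{v}]$, so the projected filled set is the unit disk, whose boundary is the unit circle.) There is essentially no obstacle here beyond keeping track of sign conventions and the reduction to the nonnegative orthant; the content is the trigonometric cancellation that makes $\theta_s(0)\cdot s$ coincide exactly with $|\pi(s)|$.
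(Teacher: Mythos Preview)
Your proof is correct and follows essentially the same route as the paper: both expand $\theta_s(0)\cdot s=\sum_j s_j\cos(u_s(0)-\overline{\alpha}_j)$ via the angle-difference formula and plug in the explicit value \eqref{eq:explicit_formula_us(0)} of $u_s(0)$, the only difference being that you package the computation as the identity $\theta_s(0)\cdot s=|\pi(s)|$ whereas the paper computes $\re$ and $\im$ of $\pi(rs)$ separately to get $(\cos u_s(0),\sin u_s(0))$. Your surjectivity argument, drawn from Proposition~\ref{prop:admissible_directions_asymp_flat}, is self-contained, while the paper instead invokes \cite[Theorem~2]{BR22}; this is a minor but genuine simplification on your part.
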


\begin{proof}
    Let $s \in \mathbb{S}^{d-1}$ an admissible direction and $r = \frac{1}{\theta_s(0) \cdot s}$ so that $rs$ is in the boundary of \eqref{eq:limit_shape_k_to_0_Rd}. We have 
    \begin{align*}
        \re\left( \pi(rs) \right) &= \re \left(\frac{\sum_{j=1}^d s_j e^{i \overline{\alpha}_j}}{\sum_{j=1}^d s_j \cos\left(u_s(0) - \overline{\alpha}_j\right)}\right) \\
        &= \frac{\sum_{j=1}^d s_j \cos\left( \overline{\alpha}_j\right)}{ \cos(u_s(0)) \sum_{j=1}^ds_j \cos\left( \overline{\alpha}_j \right) + \sin(u_s(0)) \sum_{j=1}^d s_j \sin\left( \overline{\alpha}_j \right)}\\
        &= \frac{1}{\cos(u_s(0)) + \sin(u_s(0)) \frac{\sum_{j=1}^d s_j \sin\left( \overline{\alpha}_j\right)}{\sum_{j=1}^d s_j \cos \left( \overline{\alpha}_j\right)}}\\
        &= \frac{1}{\cos(u_s(0)) + \sin(u_s(0)) \tan(u_s(0))} \\
        &= \frac{\cos(u_s(0))}{ \cos^2(u_s(0)) + \sin^2 (u_s(0))}\\
        &= \cos(u_s(0)).
    \end{align*}
Similarly,
\[\im (\pi(rs)) = \sin(u_s(0)),\]
so $\pi(rs)$ is in the unit circle. To conclude, \cite[Theorem~2]{BR22} shows that the circle is \emph{fully} described as $s$ varies.
\end{proof}

\section*{Acknowledgments}

The author would like to thank the authors of \cite{BdTR} for fruitful discussions, advices and explanations of some details in their proofs. 

This work was conducted within the France 2030 framework programme, Centre Henri Lebesgue ANR-11-LABX-0020-01.

During the development of this work, the author was partially supported by the DIMERS project ANR-18-CE40-0033, the RAWABRANCH project ANR-23-CE40-0008, both funded by the French National Research Agency, and by  the European Research Council (ERC) project COMBINEPIC under the European Union’s Horizon 2020 research and innovation programme under the Grant Agreement No. 759702.

\bibliographystyle{plain}
\bibliography{biblio}

\begin{thebibliography}{10}

\bibitem{abramowitz}
Milton Abramowitz and Irene~A. Stegun.
\newblock {\em Handbook of mathematical functions with formulas, graphs, and mathematical tables}, volume No. 55 of {\em National Bureau of Standards Applied Mathematics Series}.
\newblock U. S. Government Printing Office, Washington, DC, 1964.

\bibitem{AM22}
Ian Alevy and Sevak Mkrtchyan.
\newblock The limit shape of the leaky {A}belian sandpile model.
\newblock {\em Int. Math. Res. Not. IMRN}, (16):12767--12802, 2022.

\bibitem{BTW}
Per Bak, Chao Tang, and Kurt Wiesenfeld.
\newblock Self-organized criticality: An explanation of the 1/f noise.
\newblock {\em Phys. Rev. Lett.}, 59:381--384, Jul 1987.

\bibitem{BBMR}
Théo Ballu, Cédric Boutillier, Sevak Mkrtchyan, and Kilian Raschel.
\newblock Limit shape of the leaky abelian sandpile model with multiple layers, 2025.

\bibitem{Bou21}
Ahmed Bou-Rabee.
\newblock Convergence of the random abelian sandpile.
\newblock {\em Ann. Probab.}, 49(6):3168--3196, 2021.

\bibitem{BdT}
C\'edric Boutillier and B\'eatrice de~Tili\`ere.
\newblock The critical {$Z$}-invariant {I}sing model via dimers: locality property.
\newblock {\em Comm. Math. Phys.}, 301(2):473--516, 2011.

\bibitem{BdT_survey_isoradial}
C\'edric Boutillier and B\'eatrice de~Tili\`ere.
\newblock Statistical mechanics on isoradial graphs.
\newblock In {\em Probability in complex physical systems}, volume~11 of {\em Springer Proc. Math.}, pages 491--512. Springer, Heidelberg, 2012.

\bibitem{BdTR}
C\'{e}dric Boutillier, B\'{e}atrice de~Tili\`ere, and Kilian Raschel.
\newblock The {$Z$}-invariant massive {L}aplacian on isoradial graphs.
\newblock {\em Invent. Math.}, 208(1):109--189, 2017.

\bibitem{BR22}
C\'{e}dric Boutillier and Kilian Raschel.
\newblock Martin boundary of killed random walks on isoradial graphs.
\newblock {\em Potential Anal.}, 57(2):201--226, 2022.

\bibitem{ChSm}
Dmitry Chelkak and Stanislav Smirnov.
\newblock Discrete complex analysis on isoradial graphs.
\newblock {\em Adv. Math.}, 228(3):1590--1630, 2011.

\bibitem{Hol_survey}
Alexander~E. Holroyd, Lionel Levine, Karola M\'esz\'aros, Yuval Peres, James Propp, and David~B. Wilson.
\newblock Chip-firing and rotor-routing on directed graphs.
\newblock In {\em In and out of equilibrium. 2}, volume~60 of {\em Progr. Probab.}, pages 331--364. Birkh\"auser, Basel, 2008.

\bibitem{Jarai_survey}
Antal~A. J\'arai.
\newblock Sandpile models.
\newblock {\em Probab. Surv.}, 15:243--306, 2018.

\bibitem{kenyon_laplace_dirac_planar}
R.~Kenyon.
\newblock The {L}aplacian and {D}irac operators on critical planar graphs.
\newblock {\em Invent. Math.}, 150(2):409--439, 2002.

\bibitem{Law}
Derek~F. Lawden.
\newblock {\em Elliptic functions and applications}, volume~80 of {\em Applied Mathematical Sciences}.
\newblock Springer-Verlag, New York, 1989.

\bibitem{LevPegSma}
Lionel Levine, Wesley Pegden, and Charles~K. Smart.
\newblock Apollonian structure in the {A}belian sandpile.
\newblock {\em Geom. Funct. Anal.}, 26(1):306--336, 2016.

\bibitem{PegSma}
Wesley Pegden and Charles~K. Smart.
\newblock Convergence of the {A}belian sandpile.
\newblock {\em Duke Math. J.}, 162(4):627--642, 2013.

\bibitem{PrWi-98}
James~Gary Propp and David~Bruce Wilson.
\newblock How to get a perfectly random sample from a generic {M}arkov chain and generate a random spanning tree of a directed graph.
\newblock volume~27, pages 170--217. 1998.
\newblock 7th Annual ACM-SIAM Symposium on Discrete Algorithms (Atlanta, GA, 1996).

\end{thebibliography}

\end{document}